\ifdefined\cRKDGclean
  \documentclass[final,hidelinks]{siamart251216}
\else
  \documentclass[final,hidelinks]{siamart251216}
\fi
\ifdefined\cRKDGclean\else
\fi
\usepackage{amsfonts,amssymb,mathtools,microtype}
\usepackage{esint}
\usepackage{graphicx}
\usepackage{booktabs}
\usepackage{subcaption}
\usepackage{multirow}
\usepackage{flafter}
\usepackage{placeins}
\allowdisplaybreaks[1]

\newcommand{\dd}{\,\mathrm{d}}
\newcommand{\loc}{\mathcal{G}}
\newcommand{\DG}{\mathcal{F}}
\newcommand{\Q}{\mathcal{Q}}
\newcommand{\Bcal}{\mathcal{B}}
\newcommand{\wh}[1]{\widehat{#1}}
\newcommand{\bu}{\mathbf{u}}
\newcommand{\bv}{\mathbf{v}}
\newcommand{\bw}{\mathbf{w}}
\newcommand{\bfv}{\mathbf{f}}
\newcommand{\bn}{\mathbf{n}}
\newcommand{\bx}{\mathbf{x}}
\newcommand{\by}{\mathbf{y}}
\newcommand{\Reta}{R_\eta}

\newcommand{\norm}[1]{\lVert #1\rVert}

\theoremstyle{plain}
\theoremheaderfont{\normalfont\scshape}
\theorembodyfont{\normalfont\itshape}
\theoremseparator{.}
\theoremsymbol{}
\newtheorem{assumption}{Assumption}[section]
\theoremstyle{plain}
\theoremheaderfont{\normalfont\itshape}
\theorembodyfont{\normalfont}
\theoremseparator{.}
\theoremsymbol{}
\newtheorem{remark}{Remark}[section]
\theoremstyle{plain}
\theoremheaderfont{\normalfont\itshape}
\theorembodyfont{\normalfont}
\theoremseparator{.}
\theoremsymbol{}
\newtheorem{example}{Example}[section]
\crefname{assumption}{Assumption}{Assumptions}
\Crefname{assumption}{Assumption}{Assumptions}
\crefname{remark}{Remark}{Remarks}
\Crefname{remark}{Remark}{Remarks}
\hypersetup{pdftitle={Fully discrete multi-entropy-stable compact Runge--Kutta
discontinuous Galerkin methods for hyperbolic conservation laws},
 pdfsubject={Fully discrete multi-entropy-stable compact Runge--Kutta discontinuous Galerkin methods for hyperbolic conservation laws}}
\graphicspath{{figures/}}

\title{Fully discrete multi-entropy-stable compact Runge--Kutta
discontinuous Galerkin methods for hyperbolic conservation laws}
\author{Zheng Sun\thanks{Department of Mathematics, The University of Alabama,
		Tuscaloosa, AL 35487, USA. 
(\email{zsun30@ua.edu}).}}

\begin{document}
\raggedbottom
\maketitle
\begin{abstract}
We develop fully discrete multi-entropy-stable high-order compact
Runge--Kutta discontinuous Galerkin (cRKDG) methods for hyperbolic
conservation laws. Our main result is a weak cell entropy inequality
for multi-dimensional systems after one full high-order Runge--Kutta
(RK) time step. The proof exploits the cell-local intermediate stages of cRKDG:
their perturbations are controlled by the initial in-cell variation.
The analysis does not require a strong stability preserving
decomposition or stagewise entropy correction. For a sufficiently dissipative
Lax--Friedrichs flux and explicit RK methods with nonnegative output
weights, the inequality holds under a suitable time-step restriction
for strongly convex entropy pairs. For a prescribed, uniformly
conditioned family of such entropies, common dissipation and time-step
conditions ensure weak cell entropy stability for all entropy pairs.
A single cell-average-preserving scaling at the end of each step then
enforces all corresponding strong cell entropy inequalities
simultaneously. We also prove a Lax--Wendroff-type theorem: under the
stated assumptions, any strongly convergent sequence of numerical
solutions converges to a weak solution satisfying all prescribed
entropy inequalities. Numerical experiments in one and two dimensions
demonstrate high-order accuracy, the benefit of multiple entropy
constraints for nonconvex scalar problems, and robust performance on
shock problems.
\end{abstract}

\begin{keywords}
compact Runge--Kutta discontinuous Galerkin method,
hyperbolic conservation laws,
fully discrete multi-entropy stability,
cell entropy inequality,
scaling limiter,
Lax--Wendroff convergence
\end{keywords}
\begin{MSCcodes}
65M60, 65M12, 35L65, 35L45
\end{MSCcodes}

\section{Introduction}
\label{sec:introduction}

Hyperbolic conservation laws arise in many applications, including
compressible gas dynamics, magnetohydrodynamics, and shallow water flows.
Even smooth initial data may lead to discontinuous solutions, and entropy
conditions play a fundamental role in selecting physically relevant weak
solutions. It is thus desirable that numerical methods reproduce suitable
entropy inequalities at the discrete level.

The discontinuous Galerkin (DG) finite element method is a widely used spatial discretization for hyperbolic conservation laws. It combines high-order accuracy,
local conservation, geometric flexibility, and a compact spatial
stencil \cite{CockburnShu2001}. When coupled with explicit Runge--Kutta
(RK) time integration, it gives the classical RKDG method. However,
the DG operator communicates with neighboring cells at every RK stage,
so the effective stencil of a full time step grows with the number of
stages.

To avoid this stage-by-stage stencil expansion, we  introduced the ``Runge--Kutta discontinuous
Galerkin method with compact stencils'' \cite{ChenSunXing2024}, which we refer to more briefly
as the compact RKDG (cRKDG) method. Its intermediate stages use a
cell-local spatial operator, while the conservative DG operator is used
only in the final update. An entire step therefore involves only
immediate neighbors, independently of the number of RK stages.
The method retains local conservation and the designed high-order
accuracy while simplifying data communication, parallel implementation and boundary treatment.
Techniques for bound preservation and total variation control were subsequently
developed in \cite{LiuSunZhang2025} and \cite{XuSun2026}, respectively.
In this work, we develop fully discrete multi-entropy-stable cRKDG methods.

At the semidiscrete level, the usual DG scheme can satisfy a cell
entropy inequality for the square entropy \cite{JiangShu1994}. For general convex entropies,
Tadmor's entropy
conservative and entropy stable flux framework
\cite{Tadmor1987,Tadmor2003} has led to many high-order entropy stable
methods. Examples include TeCNO schemes
\cite{FjordholmMishraTadmor2012}, summation-by-parts finite-difference
methods \cite{FisherCarpenter2013}, split-form DG methods
\cite{Gassner2013}, and DG methods based on suitable quadrature and flux
differencing \cite{ChenShu2017,Chan2018}; see \cite{ChenShu2020} for a
review. Other approaches use local entropy corrections or artificial
viscosity \cite{AbgrallOffnerRanocha2022,LiuGuoJiangZhang2025,Chan2025AV},
or impose semidiscrete entropy constraints through DG subcell limiting
\cite{RuedaRamirezBolmKuzminGassner2024,LinChan2024}.

For high-order explicit schemes, establishing fully discrete entropy
stability of the unmodified method can be challenging even for the
square entropy. See \cite{SunShu2019,XuZhangShuWang2019} for linear
problems and \cite{WeiShu2025,WeiShu2026} for nonlinear problems. For general convex entropies, limiting, correction, and relaxation
provide ways to enforce fully discrete entropy stability. A common idea
is to modify a high-order candidate relative to a suitable reference
state. In convex limiting, a property-preserving low-order discretization
supplies this reference \cite{GuermondPopov2016,GuermondEtAlGreedy2024}.
The high-order discretization is blended with this reference through
conservative flux limiting \cite{GuermondPopovTomas2019,Kuzmin2020,Hajduk2021}.
Related approaches are used in DG subcell methods
\cite{Pazner2021,RuedaRamirezPaznerGassner2022}.
Suitable entropy constraints on the flux corrections yield fully discrete
entropy stable schemes \cite{KuzminHajdukRupp2022}.
Relaxation RK methods instead use the solution from the previous time step as the reference
state and rescale the RK increment to preserve prescribed global entropy
structure \cite{RanochaEtAl2020}. Their local extensions enforce cellwise
inequalities for several convex entropies using suitable spatial entropy
estimates \cite{RanochaDalcinParsani2020}.

For DG methods, a natural question is whether the updated cell average
can itself serve as the reference state \cite{ZhangShu2010}. If it satisfies a
\emph{weak cell entropy inequality}, convexity allows the polynomial to
be scaled towards it to recover the full cell entropy inequality.
This gives a simple cellwise correction based on the high-order update
itself and preserves the conservative averages.

Carlier and Renac \cite{CarlierRenac2023} established this weak property
for forward-Euler updates of high-order spectral discontinuous
approximations. Building on this observation, we developed a
limiter-based approach \cite{LiuGuoJiangSun} that scales the polynomials to
enforce fully discrete cell entropy inequalities for several entropies
simultaneously. In concurrent work, Wu \cite{WuEPO} describes this
mechanism from a \emph{weak-to-strong} viewpoint and combines entropy,
invariant-domain, and oscillation constraints in a common framework.
Related fully discrete methods have been developed for  MHD equations \cite{LiuJiangSun2026,CaoWuYuan2026} and relativistic
Euler equations \cite{XuWu2026}.

Within this framework, one approach for extension to RK time integration is to
apply the forward-Euler entropy argument stage by stage through a
strong stability preserving (SSP) decomposition. The resulting
stagewise corrections may reduce the scheme to first order once
the limiter is active \cite{LiuGuoJiangSun,WuEPO}.
SSP multistep methods apply the correction only at the end of each step
\cite{LiuGuoJiangSun,WuEPO}, but require solution history and startup
procedures. Moreover, variable time stepping can be less convenient
because the SSP condition depends on recent step sizes
\cite{HadjimichaelKetchesonLocziNemeth2016}. One-step RK methods instead
naturally accommodate the changing Courant--Friedrichs--Lewy (CFL) time
step as wave speeds evolve.

We address this difficulty for cRKDG by establishing the weak cell
entropy inequality after one full high-order RK step, without an
SSP decomposition or stagewise entropy correction. At first glance,
the cell-local intermediate stages seem unfavorable for entropy
analysis because they lack the usual conservative intercell flux
structure. However, their locality indeed simplifies the analysis.
Since neighboring information is not introduced at the intermediate
stages, the stage perturbations can be controlled by the variation
already present in the same cell at the beginning of the step.
Neighboring states enter only through the final conservative coupling.
Thus, the compact stage structure is not only a computational
advantage; it also provides a direct route to applying the
weak-to-strong framework to high-order cRKDG schemes without
an SSP requirement.

Our main result holds for a sufficiently dissipative Lax--Friedrichs
flux and explicit RK methods with nonnegative output weights under a
suitable CFL condition. It applies to multi-dimensional systems with
strongly convex entropy pairs under the stated regularity, curvature,
and state-range assumptions. The proof combines the Jensen
gap of the input polynomial, relative entropy estimates for the
numerical flux, and control of the local stages. In summary, the main
contributions are:
\begin{itemize}
    \item a weak cell entropy inequality after one full high-order cRKDG
    time step for multi-dimensional systems;

    \item a proof mechanism exploiting the cell-local intermediate
    stages, without an SSP decomposition or stagewise entropy correction;

    \item fully discrete multi-entropy stability for a prescribed,
    uniformly conditioned family of strongly convex entropy pairs,
    obtained through one cell-average-preserving end-of-step scaling;

    \item a Lax--Wendroff-type theorem showing that, under the stated
    assumptions, strongly convergent numerical solutions have a limit
    that is a weak solution satisfying the prescribed entropy inequalities.
\end{itemize}

The rest of the paper is organized as follows.
Section~\ref{sec:setup} introduces cRKDG and states the weak cell entropy
inequality, proved in Section~\ref{sec:proof}.
Section~\ref{sec:entropy-limiter} presents the multi-entropy extension,
the entropy correction, and the Lax--Wendroff theorem, whose proof is
given in Appendix~\ref{app:lax-wendroff}.
Section~\ref{sec:numerical-experiments} reports numerical experiments,
and Section~\ref{sec:conclusion} concludes the paper.

\section{cRKDG formulation and weak entropy stability}
\label{sec:setup}

This section introduces the cRKDG formulation and the assumptions used throughout the entropy analysis. The weak cell entropy inequality is stated in Subsection~\ref{sec:weak-entropy}.

\subsection{Conservation laws and entropy pairs}
\label{subsec:conservation-law}

Consider the hyperbolic system
\begin{equation}\label{eq:system}
    \frac{\partial\bu}{\partial t}
    +\sum_{r=1}^d\frac{\partial\bfv_r(\bu)}{\partial x_r}
    =\mathbf0,
    \qquad \bx\in\Omega\subset\mathbb R^d,\quad 0<t\leq T.
\end{equation}
Here $\bu\in\mathbb R^m$ is the vector of conserved variables,
$\bfv_r$ is the physical flux in the $x_r$ direction, and $\Omega$
is a bounded domain. We assume periodic boundary conditions or that the solution remains
supported in a fixed compact subset of $\Omega$ for all $t\in[0,T]$.

A convex scalar function $\eta$ and a vector-valued function
$\boldsymbol{\mathcal Q}=(\Q_1,\ldots,\Q_d)$ form an entropy pair for
\eqref{eq:system} if
\begin{equation}\label{eq:entropy-pair}
    \Q_r'(\bu)=\eta'(\bu)\bfv_r'(\bu),
    \qquad r=1,\ldots,d.
\end{equation}
Derivatives of scalar functions with respect to $\bu$ are row vectors,
and $\bfv_r'(\bu)$ denotes the $m\times m$ flux Jacobian. The associated entropy inequality, understood in the sense of distributions, is
\begin{equation}\label{eq:entropy-inequality}
    \frac{\partial\eta(\bu)}{\partial t}
    +\sum_{r=1}^d\frac{\partial\Q_r(\bu)}{\partial x_r}\leq0.
\end{equation}

Unless otherwise stated, all integrals are exact in the following sections. 

\subsection{Mesh, finite element spaces, and local norms}
\label{subsec:dg-space}

Let $\mathcal T_h$ be a conforming, uniformly shape-regular mesh of
$\Omega$ consisting of simplices or Cartesian boxes. For
$K\in\mathcal T_h$, define
$h_K:=\operatorname{diam}(K)$, $|K|:=\int_K1\dd\bx$, and $|\partial K|:=\int_{\partial K}1\dd s$.
In one dimension, boundary integrals are sums over the two endpoints,
and $|\partial K|=2$.

Let $\mathbb Z^k(K)$ be the total-degree polynomial
space $\mathbb P^k(K)$ or the tensor-product polynomial space
$\mathbb Q^k(K)$, with degree at most $k$ in each coordinate.
The DG space is
\begin{equation}\label{eq:space}
    \mathbb V_h^k
    :=\left\{\bv_h\in[L^2(\Omega)]^m:
       \bv_h|_K\in[\mathbb Z^k(K)]^m
       \text{ for all }K\in\mathcal T_h\right\}.
\end{equation}
For a face $e\subset\partial K$ with outward unit normal
$\bn_K=(n_{K,1},\ldots,n_{K,d})$, the exterior ($+$) and interior ($-$)
traces are defined by
$\bv_h^\pm(\bx):=\lim_{\varepsilon\to0^+}
\bv_h(\bx\pm\varepsilon\bn_K)$, $\bx\in e$. On physical boundary faces, exterior numerical states are supplied
by the boundary conditions. 
For integrable functions, define the integral average and the local
$L^2$ norms by
\begin{equation}\label{eq:cell-norm}
\begin{aligned}
    \fint_K g\dd\bx&:=\frac1{|K|}\int_K g\dd\bx,
    &\overline{\bv}_K&:=\fint_K\bv\dd\bx,\\
    \norm{\bv}_K^2&:=\int_K|\bv|^2\dd\bx,
    &\norm{\mathbf g}_{\partial K}^2
        &:=\int_{\partial K}|\mathbf g|^2\dd s.
\end{aligned}
\end{equation}
The pointwise norm $|\cdot|$ is the Euclidean norm for vectors and the
Frobenius norm for matrices. In particular,
$|\nabla\bv|^2=\sum_{r=1}^d|\partial_{x_r}\bv|^2$.
For a cell polynomial, the boundary norm uses its interior trace.
For a DG function, the two traces are distinguished by
$\norm{\bv_h^-}_{\partial K}$ and $\norm{\bv_h^+}_{\partial K}$.

Recall the inverse and trace estimates for finite elements. There is a common constant $C_k>0$ such that, for every
$K\in\mathcal T_h$ and $\mathbf z\in[\mathbb Z^k(K)]^m$,
\begin{equation}\label{eq:inverse-trace}
    h_K\norm{\nabla\mathbf z}_K\leq C_k\norm{\mathbf z}_K,
    \qquad
    h_K\norm{\mathbf z}_{\partial K}^2
        \leq C_k\norm{\mathbf z}_K^2.
\end{equation}
The constant depends only on $k$, $d$, the reference element, and the
shape-regularity bound, and is independent of the mesh size and time step.
Taking $\mathbf z$ to be a unit constant vector gives
\begin{equation}\label{eq:cell-surface-ratio}
    \frac{h_K|\partial K|}{|K|}\leq C_k.
\end{equation}

\subsection{The cRKDG method}
\label{subsec:crkdg-method}

Let $a_{\ell j}$ and $b_\ell$ be the coefficients of an $s$-stage
explicit RK method of classical order $k+1$, with
$a_{\ell j}=0$ for $j\geq\ell$. Given $\bu_h^n\in\mathbb V_h^k$
and a time step $\tau>0$, the cRKDG method uses a cell-local
operator $\loc$ for its intermediate stages and a conservative
DG operator $\DG$ for the pre-limiter solution:
\begin{subequations}\label{eq:general-crkdg}
\begin{align}
    \bu_h^{n,(1)}&=\bu_h^n,
        \label{eq:general-crkdg-a}\\
    \bu_h^{n,(\ell)}&=\bu_h^n
       +\tau\sum_{j=1}^{\ell-1}a_{\ell j}\loc(\bu_h^{n,(j)}),
       \qquad \ell=2,\ldots,s,
        \label{eq:general-crkdg-b}\\
    \bu_h^{n,(s+1)}:=\bu_h^{n+1,\star}&=\bu_h^n
       +\tau\sum_{\ell=1}^s b_\ell\DG(\bu_h^{n,(\ell)}).
        \label{eq:general-crkdg-c}
\end{align}
\end{subequations}
The operators $\loc$ and $\DG$ take values in $\mathbb V_h^k$
and are defined cellwise by
\begin{subequations}\label{eq:operators}
\begin{align}
    \int_K\loc(\bu_h)\cdot\bv_h\dd\bx
    &=\sum_{r=1}^d\int_K
        \bfv_r(\bu_h)\cdot\partial_{x_r}\bv_h\dd\bx
       -\int_{\partial K}
        \bfv_{\bn_K}(\bu_h^-)\cdot\bv_h^-\dd s,
        \label{eq:local-operator}\\
    \int_K\DG(\bu_h)\cdot\bv_h\dd\bx
    &=\sum_{r=1}^d\int_K
        \bfv_r(\bu_h)\cdot\partial_{x_r}\bv_h\dd\bx
       -\int_{\partial K}
        \wh{\bfv}_{\bn_K}(\bu_h^-,\bu_h^+)\cdot\bv_h^-\dd s
        \label{eq:dg-operator}
\end{align}
\end{subequations}
for every $\bv_h\in[\mathbb Z^k(K)]^m$ and $K\in\mathcal T_h$.
Here, for a unit vector $\bn=(n_1,\ldots,n_d)$, 
$\bfv_{\bn}:=\sum_{r=1}^d n_r\bfv_r$ and 
$\wh{\bfv}_{\bn}$ is the Lax--Friedrichs numerical flux
\begin{equation}\label{eq:lf-flux}
    \wh{\bfv}_{\bn}(\bw,\bv)
       :=\frac12[\bfv_{\bn}(\bw)+\bfv_{\bn}(\bv)]
           -\frac\alpha2(\bv-\bw),
\end{equation}
where $\alpha>0$ is fixed throughout the current time step and satisfies
the condition in \eqref{eq:alpha-system}. It may be updated between time
steps, provided the same value is used in all stages and in both numerical
fluxes during each step.
The two operators in \eqref{eq:operators} differ only in their boundary terms:
$\loc$ uses the interior physical flux, whereas $\DG$ uses
the numerical flux and involves neighboring traces.

Testing \eqref{eq:dg-operator} with constant vectors and averaging
\eqref{eq:general-crkdg-c} yields the conservative cell-average update
\begin{equation}\label{eq:crkdg-mean-update}
    \overline{\bu}_{h,K}^{n+1,\star}
       =\overline{\bu}_{h,K}^{\,n}
        -\frac\tau{|K|}\int_{\partial K}
            \wh{\bfv}_{\bn_K}^{n,\mathrm{RK}}\dd s,
\end{equation}
where $\wh{\bfv}_{\bn_K}^{n,\mathrm{RK}}$ is the RK-averaged
numerical flux
\begin{equation}\label{eq:rk-physical-flux-general}
    \wh{\bfv}_{\bn_K}^{n,\mathrm{RK}}
       :=\sum_{\ell=1}^s b_\ell\wh{\bfv}_{\bn_K}
          (\bu_h^{n,(\ell),-},\bu_h^{n,(\ell),+}).
\end{equation}

\subsection{Proper numerical entropy fluxes}
\label{subsec:numerical-entropy-fluxes}

For $\bn = (n_1,\cdots,n_d)$, define the normal entropy flux by
$\Q_{\bn}:=\sum_{r=1}^d n_r\Q_r$. The compatibility condition
\eqref{eq:entropy-pair} gives
\begin{equation}\label{eq:normal-entropy-pair}
    \Q_{\bn}'(\bv)=\eta'(\bv)\bfv_{\bn}'(\bv).
\end{equation}
A numerical entropy flux $\wh{\Q}_{\bn}$ associated with a numerical
physical flux $\wh{\bfv}_{\bn}$ is called proper
\cite{KarlsenRisebroStorrost2014,ZhaoWu1997} if
$\partial_1\wh{\Q}_{\bn}(\bw,\bv)
       =\eta'(\bw)\,\partial_1\wh{\bfv}_{\bn}(\bw,\bv)$ and   $\partial_2\wh{\Q}_{\bn}(\bw,\bv)
       =\eta'(\bv)\,\partial_2\wh{\bfv}_{\bn}(\bw,\bv)$. For the Lax--Friedrichs flux \eqref{eq:lf-flux}, the corresponding
proper numerical entropy flux is
\begin{equation}\label{eq:lf-entropy-flux}
    \wh{\Q}_{\bn}(\bw,\bv)
       :=\frac12[\Q_{\bn}(\bw)+\Q_{\bn}(\bv)]
           -\frac\alpha2[\eta(\bv)-\eta(\bw)],
\end{equation}
with the same parameter $\alpha$. It is also consistent and conservative:
\begin{equation}\label{eq:normal-entropy-flux-conservation}
    \wh{\Q}_{\bn}(\bv,\bv)=\Q_{\bn}(\bv),
    \qquad
    \wh{\Q}_{-\bn}(\bv,\bw)=-\wh{\Q}_{\bn}(\bw,\bv).
\end{equation}
Using the stages in \eqref{eq:general-crkdg}, define the RK-averaged
numerical entropy flux on faces by
\begin{equation}\label{eq:rk-entropy-flux-general}
    \wh{\Q}_{\bn_K}^{n,\mathrm{RK}}
       :=\sum_{\ell=1}^s b_\ell\wh{\Q}_{\bn_K}
          (\bu_h^{n,(\ell),-},\bu_h^{n,(\ell),+}).
\end{equation}

\subsection{Assumptions}
\label{subsec:assumptions}

The entropy analysis uses the following assumptions.

\begin{assumption}[Regularity and state range]
\label{ass:state-range}
There exists a nonempty compact convex set $\Bcal\subset\mathbb R^m$
such that $\bfv_r$, $\eta$, and $\Q_r$ are $C^2$ on an open
neighborhood of $\Bcal$, for $r=1,\ldots,d$. At every step and in
every cell $K\in\mathcal T_h$, the input, local stages, and pre-limiter
solution satisfy
\begin{equation}\label{eq:numerical-state-bounds}
    \bu_{h,K}^{n,(\ell)}(\bx)\in\Bcal
       \quad\text{for }\bx\in\overline K,\ \ell=1,\ldots,s+1.
\end{equation}
The set $\Bcal$ is independent of the mesh and the time step. The exterior states supplied at physical boundary faces are also
assumed to belong to $\mathcal{B}$ at every RK stage.
\end{assumption}

\begin{assumption}[Uniform convexity of the entropy]
\label{ass:entropy}
There exist constants $0<\mu_\eta\leq\nu_\eta<\infty$, independent
of the mesh and time step, such that
\begin{equation}\label{eq:hessian-bounds}
    \mu_\eta I_m\preceq\eta''(\bv)\preceq\nu_\eta I_m,
    \qquad \bv\in\Bcal.
\end{equation}
Here $\preceq$ denotes the ordering of symmetric matrices by positive
semidefiniteness.
\end{assumption}

\begin{assumption}[Nonnegative RK output weights]
\label{ass:rk-weights}
The output weights satisfy
\begin{equation}\label{eq:rk-output-weights}
    b_\ell\geq0,\qquad \sum_{\ell=1}^s b_\ell=1.
\end{equation}
\end{assumption}

\begin{assumption}[Sufficient flux dissipation]
\label{ass:alpha}
The parameter $\alpha$ in \eqref{eq:lf-flux} satisfies
\begin{equation}\label{eq:alpha-system}
    \alpha\geq2\kappa_\eta L_f,
    \qquad \text{where}\quad
    \kappa_\eta:=\frac{\nu_\eta}{\mu_\eta},
    \quad
    L_f:=\max_{\bv\in\Bcal}
    \left(\sum_{r=1}^d\norm{\bfv_r'(\bv)}_2^2\right)^{1/2}.
\end{equation}
\end{assumption}

For $\bv\in\Bcal$, arbitrary vectors $\mathbf z_r\in\mathbb R^m$,
and $|\bn|=1$, Cauchy--Schwarz gives
\begin{equation}\label{eq:directional-flux-bounds}
    \norm{\bfv_{\bn}'(\bv)}_2\leq L_f,
    \qquad
    \left|\sum_{r=1}^d\bfv_r'(\bv)\mathbf z_r\right|
    \leq L_f\left(\sum_{r=1}^d|\mathbf z_r|^2\right)^{1/2}.
\end{equation}

\begin{remark}
The set $\Bcal$ bounds the numerical states and the derivatives used below;
it need not coincide with the invariant region. For example, for Burgers'
equation with sine initial data, one may take $\Bcal=[-3,3]$ instead of
$[-1,1]$, provided the numerical states lie in $\Bcal$.
\end{remark}
\subsection{Main theorem: weak cell entropy inequality}
\label{sec:weak-entropy}

Define
\begin{equation}\label{eq:Ck-astar}
    a_*:=\max_{1\leq\ell\leq s}
                     \sum_{j=1}^{\ell-1}|a_{\ell j}|,
    \qquad
    \lambda_K:=\frac{\tau}{h_K},
    \qquad \sigma_K:=\alpha\lambda_K.
\end{equation}

\begin{theorem}[Weak cell entropy inequality]
\label{thm:weak-entropy}
Under Assumptions~\ref{ass:state-range}, \ref{ass:entropy}, 
\ref{ass:rk-weights}, and \ref{ass:alpha},
suppose
\begin{equation}\label{eq:higher-order-CFL}
    \sigma_K=\frac{\alpha\tau}{h_K}
    \leq{(C_k\max\{a_*,9\kappa_\eta\})^{-1}},
    \qquad K\in\mathcal T_h.
\end{equation}
Then the pre-limiter cRKDG update satisfies
\begin{equation}\label{eq:higher-order-weak}
    \eta\!\left(\overline{\bu}_{h,K}^{n+1,\star}\right)
    \leq\fint_K\eta(\bu_h^n)\dd\bx
       -\frac{\tau}{|K|}\int_{\partial K}
                     \wh{\Q}_{\bn_K}^{n,\mathrm{RK}}\dd s.
\end{equation}
\end{theorem}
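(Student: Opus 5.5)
We plan to compare the cell-average update with the averaged first-order (forward-Euler, Lax--Friedrichs) structure. We write $\bu:=\bu_h^n|_K$, $\overline{\bu}:=\overline{\bu}_{h,K}^n$, and $\delta:=\norm{\bu-\overline{\bu}}_K$, the initial in-cell variation.

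\textbf{Step 1 (Jensen gap).} Strong convexity (Assumption~\ref{ass:entropy}) gives a quantitative Jensen inequality
\[
\fint_K\eta(\bu)\dd\bx-\eta(\overline{\bu})\geq\frac{\mu_\eta}{2|K|}\delta^2 .
\]
This quantity is the ``budget'' that will pay for all high-order perturbations.

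\textbf{Step 2 (Expansion of the left side).} Let $\mathbf d:=\overline{\bu}_{h,K}^{n+1,\star}-\overline{\bu}=-\frac{\tau}{|K|}\int_{\partial K}\wh{\bfv}^{n,\mathrm{RK}}_{\bn_K}\dd s$. Taylor expansion gives
\[
\eta(\overline{\bu}+\mathbf d)\leq\eta(\overline{\bu})+\eta'(\overline{\bu})\mathbf d+\frac{\nu_\eta}{2}|\mathbf d|^2 .
\]
Since $\int_{\partial K}\bfv_{\bn_K}(\overline{\bu})\dd s=0$ and $\int_{\partial K}\Q_{\bn_K}(\overline{\bu})\dd s=0$, we can subtract these constant-state fluxes and work face by face with relative quantities. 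For each stage $\ell$ and face point, set $\bw=\bu^{(\ell),-}$ and $\bv=\bu^{(\ell),+}$. We then aim for the pointwise relative entropy inequality
\[
-\eta'(\overline{\bu})\bigl[\wh{\bfv}_{\bn}(\bw,\bv)-\bfv_{\bn}(\overline{\bu})\bigr]+\bigl[\wh{\Q}_{\bn}(\bw,\bv)-\Q_{\bn}(\overline{\bu})\bigr]\leq c_1|\bw-\overline{\bu}|^2-c_2|\bv-\overline{\bu}|^2 ,
\]
with $c_2\sim\alpha\mu_\eta/4$ and $c_1\sim\alpha\nu_\eta$. To derive it, we write each difference through the relative entropy $\eta(\bv)-\eta(\overline{\bu})-\eta'(\overline{\bu})(\bv-\overline{\bu})$ and the relative entropy flux. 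We bound the relative flux by $\nu_\eta L_f|\bv-\overline{\bu}|^2$ using \eqref{eq:normal-entropy-pair} and \eqref{eq:directional-flux-bounds}. The dissipation term $-\frac{\alpha}{2}(\text{relative entropy of }\bv)\leq-\frac{\alpha\mu_\eta}{4}|\bv-\overline{\bu}|^2$ then absorbs the exterior contributions thanks to $\alpha\geq2\kappa_\eta L_f$ (Assumption~\ref{ass:alpha}). The interior contribution is a positive term proportional to $\alpha\nu_\eta|\bw-\overline{\bu}|^2$. The quadratic term $\frac{\nu_\eta}{2}|\mathbf d|^2$ is handled similarly: by Cauchy--Schwarz it is bounded by $\tau^2|K|^{-2}|\partial K|$ times the integral of $|\wh{\bfv}-\bfv(\overline{\bu})|^2$. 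That integral is in turn bounded by $\alpha^2$ times squared deviations of the interior and exterior traces. Using \eqref{eq:cell-surface-ratio} and $\sigma_K\leq(9C_k\kappa_\eta)^{-1}$, the exterior part is absorbed by the negative $c_2$ term. We drop the remaining negative exterior term. Summing with weights $b_\ell\geq0$ (Assumption~\ref{ass:rk-weights}), it remains to show
\[
C\,\frac{\alpha\tau\nu_\eta}{|K|}\sum_\ell b_\ell\norm{\bu^{(\ell),-}-\overline{\bu}}_{\partial K}^2\leq\frac{\mu_\eta}{2|K|}\delta^2 .
\]

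\textbf{Step 3 (Control of local stages; main obstacle).} This is where cell-locality is used. We claim $\norm{\bu^{(\ell)}-\overline{\bu}}_K\leq C\delta$. Testing \eqref{eq:local-operator} shows that $\loc(\overline{\bu})=0$, since a constant state has zero divergence and its physical boundary flux integrates against $\bv_h$ consistently. Writing $\loc(\bz)-\loc(\overline{\bu})$ via the mean value theorem along $\Bcal$ (convex) and applying the inverse/trace estimates \eqref{eq:inverse-trace} gives $\norm{\loc(\bz)}_K\leq C_kL_f h_K^{-1}\norm{\bz-\overline{\bu}}_K$. Induction on $\ell$ with $\tau a_*C_kL_f/h_K\leq\sigma_K a_*C_k\leq1$ (using $L_f\leq\alpha$) then gives a geometric bound $\norm{\bu^{(\ell)}-\overline{\bu}}_K\leq 2^{\ell-1}\delta$ or similar. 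No neighboring data enters, which is precisely why the bound depends only on $\delta$. The trace inequality then converts this into $\norm{\bu^{(\ell),-}-\overline{\bu}}^2_{\partial K}\leq Ch_K^{-1}\delta^2$. Thus the left side above is $\lesssim\sigma_K\nu_\eta C_k\delta^2/|K|$, and it is dominated by $\mu_\eta\delta^2/(2|K|)$ under $\sigma_K\leq(9C_k\kappa_\eta)^{-1}$.

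The delicate part is making the constants close exactly to produce the stated factor $9\kappa_\eta$ without a stage-number-dependent growth factor. I expect that we must bound $\loc$ increments more cleverly, for example by testing against the stage deviation itself so that the stage bound stays at $\delta$ times an absolute constant, or that the $a_*$ condition is used to keep the stage deviations at most about $2\delta$. If this fails, we accept a slightly worse constant and verify which pieces combine to the factor $9$.
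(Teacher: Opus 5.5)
Your outline follows the paper's route: Jensen gap as budget, Taylor expansion of the new mean, subtraction of the constant-state fluxes, a face bracket with positive interior and negative exterior parts, Cauchy--Schwarz for the quadratic term, and cell-local control of the stages. However, the step that fixes the stated CFL constant is left open, and as you set it up it does not close.

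\textbf{The stage bound.} With your claimed bound $\norm{\loc(\bw)}_K\leq C_kL_fh_K^{-1}\norm{\bw-\overline{\bu}}_K$ and $L_f\leq\alpha$, the condition $C_ka_*\sigma_K\leq1$ only gives $M_\ell\leq1+\max_{j<\ell}M_j$ for $\norm{\bu^{(\ell)}-\overline{\bu}}_K\leq M_\ell\delta$. That is, the constant grows with $\ell$. In the final estimate the interior-trace contribution is at most $\tfrac34M^2\kappa_\eta C_k\sigma_K(1+\tfrac32C_k\sigma_K)$ times the Jensen gap (with the constants of the next paragraph). Under $\kappa_\eta C_k\sigma_K\leq1/9$ this is at most $7M^2/72$, so a stage factor growing with $s$ cannot be absorbed by the $s$-independent constant $9\kappa_\eta$.

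The missing point is a factor $\tfrac12$ that you give away twice. First, Assumption~\ref{ass:alpha} and $\kappa_\eta\geq1$ give $L_f\leq\alpha/(2\kappa_\eta)\leq\alpha/2$, not merely $L_f\leq\alpha$. Second, integrate by parts in \eqref{eq:local-operator}. Because $\loc$ uses the interior physical flux, the boundary terms cancel exactly and $\loc(\bw)=-\Pi_K\bigl(\sum_{r}\bfv_r'(\bw)\partial_{x_r}\bw\bigr)$, with $\Pi_K$ the $L^2$ projection. Hence $\norm{\loc(\bw)}_K\leq L_f\norm{\nabla(\bw-\overline{\bu})}_K\leq C_kL_fh_K^{-1}\norm{\bw-\overline{\bu}}_K$ with a single $C_k$. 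The mean-value route you describe, bounding the volume term by the inverse inequality and the boundary term by the trace inequality, actually yields $2C_kL_f/h_K$. With both factors, $\norm{\bu^{(\ell)}-\overline{\bu}}_K\leq\delta+\tfrac12C_ka_*\sigma_K\max_{j<\ell}\norm{\bu^{(j)}-\overline{\bu}}_K$. The hypothesis $2\delta$ then reproduces itself for every $\ell$, independently of $s$, and no energy argument against the stage deviation is needed.

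\textbf{The exterior bracket.} A second slip in Step 2 would break the exterior absorption. The bound $|R_{\Q_{\bn}}(\bv,\overline{\bu})|\leq\nu_\eta L_f|\bv-\overline{\bu}|^2$ together with $\alpha\geq2\kappa_\eta L_f$ only gives the lower bound $0$ for the exterior bracket $\tfrac12[\alpha\Reta(\bv,\overline{\bu})-R_{\Q_{\bn}}(\bv,\overline{\bu})]$. Nothing negative then remains to absorb the exterior part of $\tfrac{\nu_\eta}{2}|\mathbf d|^2$, and the Jensen gap cannot control that part because it involves the neighbor's traces. So your $c_2\sim\alpha\mu_\eta/4$ does not follow from what you wrote.

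Write $R_{\Q_{\bn}}(\bw,\bv)=\int_0^1[\eta'(\bv+\theta(\bw-\bv))-\eta'(\bv)]\,\bfv_{\bn}'(\bv+\theta(\bw-\bv))(\bw-\bv)\dd\theta$ and use $|\eta'(\bv+\theta(\bw-\bv))-\eta'(\bv)|\leq\nu_\eta\theta|\bw-\bv|$. This recovers the factor $\tfrac12$: $|R_{\Q_{\bn}}(\bw,\bv)|\leq\tfrac{\nu_\eta L_f}{2}|\bw-\bv|^2\leq\tfrac\alpha2\Reta(\bw,\bv)$, so $c_1=3\alpha\nu_\eta/8$ and $c_2=\alpha\mu_\eta/8$. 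The remaining ingredients are:
\begin{itemize}
\item the flux bound $|\wh{\bfv}_{\bn}(\bw_1,\bw_2)-\bfv_{\bn}(\bv)|^2\leq\tfrac{9\alpha^2}{8}(|\bw_1-\bv|^2+|\bw_2-\bv|^2)$, again from $L_f\leq\alpha/2$;
\item the stage bound $2\delta$;
\item the trace inequality, which gives $\tfrac{h_K}{|K|}\norm{\bu^{(\ell),-}-\overline{\bu}}_{\partial K}^2\leq\tfrac{8C_k}{\mu_\eta}\bigl(\fint_K\eta(\bu)\dd\bx-\eta(\overline{\bu})\bigr)$.
\end{itemize}
With these, the exterior trace term carries the coefficient $-\tfrac{\mu_\eta\sigma_K}{8}(1-\tfrac92\kappa_\eta C_k\sigma_K)\leq0$. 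The interior one is at most $3\kappa_\eta C_k\sigma_K(1+\tfrac32C_k\sigma_K)\leq\tfrac7{18}$ times the Jensen gap. Both hold under $\kappa_\eta C_k\sigma_K\leq1/9$.
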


The inequality is considered weak because it controls the entropy of the new cell
average, rather than the cell average of the entropy of the pre-limiter solution.

\begin{remark}[Lax--Friedrichs constants and CFL conditions]
Assumption \ref{ass:alpha} and the time step constraint \eqref{eq:higher-order-CFL} provide only sufficient conditions for Theorem \ref{thm:weak-entropy}. In practice, the weak cell entropy inequality may hold for a smaller $\alpha$ or a larger $\tau$. We therefore do not enforce these a priori bounds in the numerical
experiments, but instead verify the entropy inequality a posteriori
using a trial-step accept/reject procedure. See Section \ref{sec:numerical-experiments} for details.
\end{remark}

\begin{remark}[Quadrature-defined operators]
\label{rem:quadrature-entropy}Theorem~\ref{thm:weak-entropy} extends to formulations in which exact
integration is replaced by appropriate quadrature rules in the
evaluation of $\loc$ and $\DG$.
For $\mathbb P^k$ elements, let $\mathsf Q_K$ and $\mathsf Q_e$
denote the volume and face quadrature functionals, respectively.
Suppose that both rules have positive weights and are exact through
degree $2k$. Under \eqref{eq:higher-order-CFL}, with $C_k$ enlarged
if necessary, we have
\begin{equation}\label{eq:quadrature-weak}
    \eta\!\left(\overline{\bu}_{h,K}^{n+1,\star}\right)
    \leq\frac1{|K|}\mathsf Q_K[\eta(\bu_h^n)]
       -\frac\tau{|K|}\sum_{e\subset\partial K}
          \mathsf Q_e[\wh{\Q}_{\bn_K}^{n,\mathrm{RK}}].
\end{equation}
The proof follows the argument in Section~\ref{sec:proof},
with a modified proof of Lemma~\ref{lem:local-operator-general-k}.
The multi-entropy results extend similarly, using quadrature-based
entropy budgets and scaled polynomials.
\end{remark}

\section{Proof of the weak cell entropy inequality}\label{sec:proof}

We now prove the main theorem in Subsection~\ref{sec:weak-entropy}.
The argument begins with a three-term entropy decomposition and then
estimates each contribution separately. 

Throughout this section, we fix a time step and a cell $K$, and suppress the superscript $n$ and subscript $h$: $\bu$ and
$\bu^{(\ell)}$ denote the input polynomial and its local stages, and  $\overline{\bu}_K$ and $\overline{\bu}_K^{n+1,\star}$ are the input and
new cell averages. 

\subsection{The three-term entropy decomposition}

For stage $\ell$, define the scaled outward numerical-flux sums and their RK averages
\begin{subequations}\label{eq:flux-sums}
\begin{alignat}{2}
    \wh{\mathbf F}_K^{(\ell)}
    &:=\frac{h_K}{|K|}\int_{\partial K}
       \wh{\bfv}_{\bn_K}(\bu^{(\ell),-},\bu^{(\ell),+})\dd s,\qquad &\wh{\mathbf F}_K^{\mathrm{RK}}
       :=\sum_{\ell=1}^s b_\ell\wh{\mathbf F}_K^{(\ell)},\label{eq:stage-flux-sums}\\
    \wh{Q}_K^{(\ell)} 
    &:=\frac{h_K}{|K|}\int_{\partial K}
       \wh{\Q}_{\bn_K}(\bu^{(\ell),-},\bu^{(\ell),+})\dd s, \qquad & \wh{Q}_K^{\mathrm{RK}}
       :=\sum_{\ell=1}^s b_\ell\wh{Q}_K^{(\ell)}.\label{eq:entropy-flux-sums}
\end{alignat}
\end{subequations}
Using \eqref{eq:crkdg-mean-update}, we obtain
\begin{equation}\label{eq:higher-order-mean-update}
    \overline{\bu}_K^{n+1,\star}
    =\overline{\bu}_K-\lambda_K\wh{\mathbf F}_K^{\mathrm{RK}}.
\end{equation}
Both means lie in $\Bcal$. Taylor's theorem and \eqref{eq:hessian-bounds}
give, for some $\bw_K$ between them,

\begin{equation}\label{eq:higher-order-final-taylor}
\begin{aligned}
    \eta(\overline{\bu}_K^{n+1,\star})
    &=\eta(\overline{\bu}_K)
      -\lambda_K\eta'(\overline{\bu}_K)\wh{\mathbf F}_K^{\mathrm{RK}} +\frac{\lambda_K^2}{2}
      \bigl(\wh{\mathbf F}_K^{\mathrm{RK}}\bigr)^{\mathsf T}
      \eta''(\bw_K)
      \wh{\mathbf F}_K^{\mathrm{RK}}\\
    &\leq\eta(\overline{\bu}_K)
      -\lambda_K\eta'(\overline{\bu}_K)\wh{\mathbf F}_K^{\mathrm{RK}}
      +\frac{\nu_\eta}{2}\lambda_K^2
        |\wh{\mathbf F}_K^{\mathrm{RK}}|^2.
\end{aligned}
\end{equation}
Subtract the old cell entropy and add the RK entropy-flux sum:
\begin{equation}\label{eq:master-decomposition-expanded}
\begin{aligned}
 &\eta(\overline{\bu}_K^{n+1,\star})
       -\fint_K\eta(\bu)\dd\bx
       +\lambda_K\wh{Q}_K^{\mathrm{RK}}\\
 &\leq
 -\underbrace{\left[\fint_K\eta(\bu)\dd\bx
                    -\eta(\overline{\bu}_K)\right]}
              _{\mathclap{\text{I: Jensen gap}}}
 +\lambda_K\!\underbrace{
   \vphantom{\left[\fint_K\eta(\bu)\dd\bx
                    -\eta(\overline{\bu}_K)\right]}
   \left[\wh{Q}_K^{\mathrm{RK}}
       -\eta'(\overline{\bu}_K)\wh{\mathbf F}_K^{\mathrm{RK}}\right]}
              _{\mathclap{\substack{\text{II: relative entropy-flux}\\
                                    \text{contribution}}}}
 +\frac{\nu_\eta}{2}\lambda_K^2\!\underbrace{
   \vphantom{\left[\fint_K\eta(\bu)\dd\bx
                    -\eta(\overline{\bu}_K)\right]}
   |\wh{\mathbf F}_K^{\mathrm{RK}}|^2}
              _{\mathclap{\substack{\text{III: squared}\\
                                    \text{numerical-flux sum}}}}.
\end{aligned}
\end{equation}
The Jensen gap is nonnegative by convexity. We estimate the two
interface terms separately and then control their interior-trace
contributions by the Jensen gap.

\subsection{Term I: the Jensen gap}

Since $\bu-\overline{\bu}_K$ has zero mean and $\eta(\overline{\bu}_K)$ and $\eta'(\overline{\bu}_K)$ are constant, the Jensen gap can be written as
\begin{equation}\label{eq:Jgap-relative-entropy}
    J_K(\bu):=\fint_K\eta(\bu)\dd\bx-\eta(\overline{\bu}_K)
       =\fint_K\Reta(\bu(\bx),\overline{\bu}_K)\dd\bx,
\end{equation}
where $\Reta$ is the relative entropy \cite{Dafermos2016}, i.e., the Bregman divergence generated by $\eta$:
\begin{equation}\label{eq:Reta-def}
    \Reta(\bw,\bv):=\eta(\bw)-\eta(\bv)-\eta'(\bv)(\bw-\bv),
    \qquad \bw,\bv\in\Bcal.
\end{equation}

\begin{lemma}[Quadratic bounds for relative entropy]
\label{lem:relative-remainders}
For every $\bw,\bv\in\Bcal$,
\begin{equation}\label{eq:Reta-bounds}
    \frac{\mu_\eta}{2}|\bw-\bv|^2
    \leq\Reta(\bw,\bv)
    \leq\frac{\nu_\eta}{2}|\bw-\bv|^2.
\end{equation}
\end{lemma}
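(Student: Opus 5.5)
The plan is to use Taylor's theorem with integral remainder along the segment joining $\bv$ to $\bw$, relying on the convexity of $\Bcal$ and the Hessian bounds \eqref{eq:hessian-bounds}. Since $\Bcal$ is convex, the segment $\bv+t(\bw-\bv)$, $t\in[0,1]$, stays in $\Bcal$. By Assumption~\ref{ass:state-range}, $\eta$ is $C^2$ on an open neighborhood of $\Bcal$, so the function $\phi(t):=\eta(\bv+t(\bw-\bv))$ is $C^2$ on $[0,1]$.

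First compute the derivatives: $\phi'(t)=\eta'(\bv+t(\bw-\bv))(\bw-\bv)$ and $\phi''(t)=(\bw-\bv)^{\mathsf T}\eta''(\bv+t(\bw-\bv))(\bw-\bv)$. Taylor's formula with integral remainder,
\[
\phi(1)=\phi(0)+\phi'(0)+\int_0^1(1-t)\phi''(t)\dd t,
\]
then turns the definition \eqref{eq:Reta-def} into
\[
\Reta(\bw,\bv)=\int_0^1(1-t)\,(\bw-\bv)^{\mathsf T}\eta''(\bv+t(\bw-\bv))(\bw-\bv)\dd t .
\]

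Next, Assumption~\ref{ass:entropy} gives the pointwise bounds $\mu_\eta|\bw-\bv|^2\le\phi''(t)\le\nu_\eta|\bw-\bv|^2$ for every $t\in[0,1]$. Since the weight $1-t$ is nonnegative and $\int_0^1(1-t)\dd t=\tfrac12$, integrating these bounds yields \eqref{eq:Reta-bounds} directly.

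There is no real obstacle here. The only point needing care is that the Hessian bounds hold only on $\Bcal$, which is why convexity of $\Bcal$ is essential: it keeps the entire segment, and hence every evaluation of $\eta''$, inside $\Bcal$. Equivalently, one could use the Lagrange mean-value form with an intermediate point $\bw_*$ on the segment, exactly as in \eqref{eq:higher-order-final-taylor}; the integral form simply avoids any fuss about the intermediate point.
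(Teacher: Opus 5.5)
Your proof is correct and is essentially the paper's argument. The paper writes $\Reta(\bw,\bv)=\int_0^1[\eta'(\bv+t\boldsymbol\delta)-\eta'(\bv)]\boldsymbol\delta\dd t$ and bounds the integrand between $\mu_\eta t|\boldsymbol\delta|^2$ and $\nu_\eta t|\boldsymbol\delta|^2$ using the Hessian bounds on the convex set $\Bcal$; this is the same Taylor expansion along the segment as your integral-remainder form with weight $1-t$, and it uses convexity of $\Bcal$ exactly as you describe.
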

\begin{proof}
Set $\boldsymbol\delta=\bw-\bv$. By the fundamental theorem of calculus,
\begin{equation}\label{eq:Reta-integral}
    \Reta(\bw,\bv)=\int_0^1
       [\eta'(\bv+t\boldsymbol\delta)-\eta'(\bv)]\boldsymbol\delta\dd t.
\end{equation}
Convexity of $\Bcal$ and \eqref{eq:hessian-bounds} give
$\mu_\eta t|\boldsymbol\delta|^2\leq
[\eta'(\bv+t\boldsymbol\delta)-\eta'(\bv)]\boldsymbol\delta
\leq\nu_\eta t|\boldsymbol\delta|^2$. Integrating in $t$ proves the bounds.
\end{proof}

\begin{lemma}[The Jensen gap controls the in-cell variation]
\label{lem:jensen-gap}
\begin{equation}\label{eq:J-bounds}
    \frac{\mu_\eta}{2|K|}\norm{\bu-\overline{\bu}_K}_K^2
    \leq J_K(\bu)
    \leq\frac{\nu_\eta}{2|K|}\norm{\bu-\overline{\bu}_K}_K^2.
\end{equation}
\end{lemma}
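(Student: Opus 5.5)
The plan is to combine the relative-entropy representation of the Jensen gap, \eqref{eq:Jgap-relative-entropy}, with the pointwise quadratic bounds of Lemma~\ref{lem:relative-remainders}, and then integrate over the cell.

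First I check that the pointwise bounds may be applied at every point of $K$. By Assumption~\ref{ass:state-range}, $\bu(\bx)\in\Bcal$ for every $\bx\in\overline K$. Since $\Bcal$ is convex and compact, the cell average $\overline{\bu}_K$, being an average of points of $\Bcal$, also lies in $\Bcal$. Hence for each $\bx\in K$ the pair $(\bu(\bx),\overline{\bu}_K)$ is admissible in Lemma~\ref{lem:relative-remainders}, which gives
\begin{equation*}
    \frac{\mu_\eta}{2}|\bu(\bx)-\overline{\bu}_K|^2
    \leq\Reta(\bu(\bx),\overline{\bu}_K)
    \leq\frac{\nu_\eta}{2}|\bu(\bx)-\overline{\bu}_K|^2 .
\end{equation*}

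Next I average this over $K$. Applying $\fint_K$ to all three members preserves the inequalities. The middle term becomes $J_K(\bu)$ by \eqref{eq:Jgap-relative-entropy}. The outer terms become
$\frac{\mu_\eta}{2|K|}\norm{\bu-\overline{\bu}_K}_K^2$ and $\frac{\nu_\eta}{2|K|}\norm{\bu-\overline{\bu}_K}_K^2$
by the definition of the local norm in \eqref{eq:cell-norm}. This yields \eqref{eq:J-bounds}.

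For completeness I also verify the identity \eqref{eq:Jgap-relative-entropy} itself. Since $\eta'(\overline{\bu}_K)$ is constant, $\fint_K\eta'(\overline{\bu}_K)(\bu-\overline{\bu}_K)\dd\bx=0$, and therefore
\[
\fint_K\Reta(\bu,\overline{\bu}_K)\dd\bx=\fint_K\eta(\bu)\dd\bx-\eta(\overline{\bu}_K).
\]
The only point needing any care is the membership $\overline{\bu}_K\in\Bcal$, which follows from convexity and closedness of $\Bcal$. The rest is routine.
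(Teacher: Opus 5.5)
Your proposal is correct and matches the paper's proof: apply Lemma~\ref{lem:relative-remainders} pointwise with $\bw=\bu(\bx)$ and $\bv=\overline{\bu}_K$, average over $K$, and use \eqref{eq:Jgap-relative-entropy}. Your extra checks, that $\overline{\bu}_K\in\Bcal$ by convexity and closedness of $\Bcal$ and that the identity \eqref{eq:Jgap-relative-entropy} holds, are correct and fill in details the paper leaves implicit.
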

\begin{proof}
Average \eqref{eq:Reta-bounds} over $K$ with $\bw=\bu(\bx)$ and
$\bv=\overline{\bu}_K$, and use \eqref{eq:Jgap-relative-entropy}.
\end{proof}

\subsection{Term II: the relative entropy-flux contribution}

\subsubsection{Reference-flux cancellation and two-state bounds}

The divergence theorem applied to constant coordinate vector
fields gives $\int_{\partial K}\bn_K\dd s=\mathbf0$.
For every constant state $\bv\in\Bcal$, it follows that $\int_{\partial K}\bfv_{\bn_K}(\bv)\dd s=\mathbf0$ and $\int_{\partial K}\Q_{\bn_K}(\bv)\dd s=0$.
Inserting these identities with $\bv=\overline{\bu}_K$ into
the stage entropy-flux contribution yields
\begin{equation}\label{eq:quadratic-interface-reference}
\begin{aligned}
 \wh{Q}_K^{(\ell)}
       -\eta'(\overline{\bu}_K)\wh{\mathbf F}_K^{(\ell)}
 =&\frac{h_K}{|K|}\int_{\partial K}
   \Bigl\{\wh{\Q}_{\bn_K}(\bu^{(\ell),-},\bu^{(\ell),+})
                         -\Q_{\bn_K}(\overline{\bu}_K)\\
 &\quad-\eta'(\overline{\bu}_K)
      [\wh{\bfv}_{\bn_K}(\bu^{(\ell),-},\bu^{(\ell),+})
                   -\bfv_{\bn_K}(\overline{\bu}_K)]\Bigr\}\dd s.
\end{aligned}
\end{equation}
Substituting the Lax--Friedrichs formulas into the integrand gives
\begin{equation}\label{eq:LF-relative-quadratic-decomposition}
\begin{aligned}
 &\wh{\Q}_{\bn_K}(\bu^{(\ell),-},\bu^{(\ell),+})
       -\Q_{\bn_K}(\overline{\bu}_K)
       -\eta'(\overline{\bu}_K)
       [\wh{\bfv}_{\bn_K}(\bu^{(\ell),-},\bu^{(\ell),+})
        -\bfv_{\bn_K}(\overline{\bu}_K)]\\
 &\quad=\frac12[\alpha\Reta(\bu^{(\ell),-},\overline{\bu}_K)
                  +R_{\Q_{\bn_K}}(\bu^{(\ell),-},\overline{\bu}_K)]\\
 &\quad\ -\frac12[\alpha\Reta(\bu^{(\ell),+},\overline{\bu}_K)
                  -R_{\Q_{\bn_K}}(\bu^{(\ell),+},\overline{\bu}_K)],
\end{aligned}
\end{equation}
where the relative entropy $\Reta$ is defined in \eqref{eq:Reta-def} and the relative entropy flux in direction $\bn$ is
\begin{equation}\label{eq:RQ-def}
    R_{\Q_{\bn}}(\bw,\bv)
       :=\Q_{\bn}(\bw)-\Q_{\bn}(\bv)
         -\eta'(\bv)[\bfv_{\bn}(\bw)-\bfv_{\bn}(\bv)],
    \qquad \bw,\bv\in\Bcal.
\end{equation}
The following lemma bounds the interior and exterior brackets.

\begin{lemma}[Quadratic bounds for the normal entropy brackets]
\label{lem:LF-quadratic}
For every $\bw,\bv\in\Bcal$ and every unit vector $\bn$,
\begin{equation}\label{eq:LF-entropy-bracket-quadratic}
    \frac{\alpha\mu_\eta}{8}|\bw-\bv|^2
    \leq\frac12[\alpha\Reta(\bw,\bv)\pm R_{\Q_{\bn}}(\bw,\bv)]
    \leq\frac{3\alpha\nu_\eta}{8}|\bw-\bv|^2.
\end{equation}
\end{lemma}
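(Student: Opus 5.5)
The plan is to bound $R_{\Q_{\bn}}(\bw,\bv)$ in absolute value by a multiple of $|\bw-\bv|^2$, and then absorb it using Lemma~\ref{lem:relative-remainders} together with Assumption~\ref{ass:alpha}.

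\emph{Integral representation of $R_{\Q_{\bn}}$.} Set $\boldsymbol\delta=\bw-\bv$ and $\bv_t=\bv+t\boldsymbol\delta\in\Bcal$, which lies in $\Bcal$ by convexity. By \eqref{eq:normal-entropy-pair},
\begin{equation*}
R_{\Q_{\bn}}(\bw,\bv)=\int_0^1\bigl[\eta'(\bv_t)-\eta'(\bv)\bigr]\bfv_{\bn}'(\bv_t)\boldsymbol\delta\dd t .
\end{equation*}
This uses $\Q_{\bn}(\bw)-\Q_{\bn}(\bv)=\int_0^1\eta'(\bv_t)\bfv_{\bn}'(\bv_t)\boldsymbol\delta\dd t$ and $\bfv_{\bn}(\bw)-\bfv_{\bn}(\bv)=\int_0^1\bfv_{\bn}'(\bv_t)\boldsymbol\delta\dd t$.

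\emph{Bounding $|R_{\Q_{\bn}}|$.} Next write $\eta'(\bv_t)-\eta'(\bv)=t\,\boldsymbol\delta^{\mathsf T}\int_0^1\eta''(\bv+st\boldsymbol\delta)\dd s$. By \eqref{eq:hessian-bounds}, the averaged Hessian has spectral norm at most $\nu_\eta$, so $|\eta'(\bv_t)-\eta'(\bv)|\le t\nu_\eta|\boldsymbol\delta|$. Combining this with $\norm{\bfv_{\bn}'}_2\le L_f$ from \eqref{eq:directional-flux-bounds} gives
\begin{equation*}
|R_{\Q_{\bn}}(\bw,\bv)|\le \nu_\eta L_f|\boldsymbol\delta|^2\int_0^1 t\dd t=\frac{\nu_\eta L_f}{2}|\boldsymbol\delta|^2 .
\end{equation*}
Assumption~\ref{ass:alpha} says $L_f\le \alpha\mu_\eta/(2\nu_\eta)$, so $|R_{\Q_{\bn}}(\bw,\bv)|\le \frac{\alpha\mu_\eta}{4}|\boldsymbol\delta|^2$.

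\emph{Combining with Lemma~\ref{lem:relative-remainders}.} For the lower bound,
\begin{equation*}
\tfrac12\bigl[\alpha\Reta\pm R_{\Q_{\bn}}\bigr]\ge \tfrac12\bigl[\tfrac{\alpha\mu_\eta}{2}-\tfrac{\alpha\mu_\eta}{4}\bigr]|\boldsymbol\delta|^2=\tfrac{\alpha\mu_\eta}{8}|\boldsymbol\delta|^2 .
\end{equation*}
For the upper bound,
\begin{equation*}
\tfrac12\bigl[\alpha\Reta\pm R_{\Q_{\bn}}\bigr]\le\tfrac12\bigl[\tfrac{\alpha\nu_\eta}{2}+\tfrac{\alpha\mu_\eta}{4}\bigr]|\boldsymbol\delta|^2\le \tfrac{3\alpha\nu_\eta}{8}|\boldsymbol\delta|^2,
\end{equation*}
where the last step uses $\mu_\eta\le\nu_\eta$.

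\emph{Main obstacle.} The only delicate point is the middle step: we need the sharp constant $\tfrac12$ from the $t$-weighting, since a cruder bound would not match the $\tfrac18$ in the statement. That weighting comes from measuring $\eta'(\bv_t)-\eta'(\bv)$ in the Euclidean norm through the Hessian bound, and the row-vector/matrix product is handled with operator-norm consistency.
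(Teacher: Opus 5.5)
Your proof is correct and follows essentially the same route as the paper. Both arguments use the same integral representation of $R_{\Q_{\bn}}$ and the same $t$-weighted Hessian bound $|R_{\Q_{\bn}}(\bw,\bv)|\le\frac{\nu_\eta L_f}{2}|\bw-\bv|^2$, followed by absorption via Assumption~\ref{ass:alpha} and Lemma~\ref{lem:relative-remainders}. The only cosmetic difference is the order of the last two steps: the paper first absorbs against the relative entropy, $|R_{\Q_{\bn}}|\le\frac{\alpha}{2}\Reta$, to get $\frac{\alpha}{4}\Reta\le\frac12[\alpha\Reta\pm R_{\Q_{\bn}}]\le\frac{3\alpha}{4}\Reta$ before applying the quadratic bounds, whereas you convert to $|\bw-\bv|^2$ first and then use $\mu_\eta\le\nu_\eta$ for the upper bound.
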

\begin{proof}
With $\boldsymbol\delta=\bw-\bv$, the fundamental theorem of calculus
and normal entropy compatibility \eqref{eq:normal-entropy-pair} give
\begin{equation}\label{eq:RQ-integral}
    R_{\Q_{\bn}}(\bw,\bv)
       =\int_0^1[\eta'(\bv+\theta\boldsymbol\delta)-\eta'(\bv)]
          \bfv_{\bn}'(\bv+\theta\boldsymbol\delta)
          \boldsymbol\delta\dd\theta.
\end{equation}
Along this segment, the Hessian bound gives
$|\eta'(\bv+\theta\boldsymbol\delta)-\eta'(\bv)|
\leq\nu_\eta \theta|\boldsymbol\delta|$. Using
\eqref{eq:directional-flux-bounds}, \eqref{eq:Reta-bounds}, and
\eqref{eq:alpha-system}, we obtain
\begin{equation}\label{eq:RQ-bound}
    |R_{\Q_{\bn}}(\bw,\bv)|
    \leq\frac{\nu_\eta L_f}{2}|\bw-\bv|^2
    \leq\kappa_\eta L_f\Reta(\bw,\bv)
    \leq\frac\alpha2\Reta(\bw,\bv).
\end{equation}
Consequently,
$(\alpha/4)\Reta(\bw,\bv)
    \leq (1/2)[\alpha\Reta(\bw,\bv)\pm R_{\Q_{\bn}}(\bw,\bv)]
    \leq({3\alpha}/{4})\Reta(\bw,\bv).$
The two bounds in \eqref{eq:Reta-bounds} complete the proof.
\end{proof}

\subsubsection{Interior- and exterior-trace contributions}

Applying \eqref{eq:LF-entropy-bracket-quadratic} to
\eqref{eq:quadratic-interface-reference} separates the squared
deviations of the interior and exterior traces. Define the scaled
squared trace deviations and their RK-weighted averages by
\begin{equation}\label{eq:stage-quadratic-trace-sums}
    V_K^{(\ell),\pm}
       :=\frac{h_K}{|K|}
          \norm{\bu^{(\ell),\pm}-\overline{\bu}_K}_{\partial K}^2, \qquad  V_K^{\mathrm{RK},\pm}
       :=\sum_{\ell=1}^s b_\ell V_K^{(\ell),\pm}.
\end{equation}
The superscripts $-$ and $+$ indicate the interior and exterior trace
sides, respectively. Both traces are compared with the old cell average
$\overline{\bu}_K$. All these quantities are nonnegative by definition and Assumption~\ref{ass:rk-weights}.

\begin{lemma}[Quadratic estimate of the RK entropy-flux contribution]
\label{lem:entropy-flux-difference-quadratic}
Under the assumptions of Theorem~\ref{thm:weak-entropy}, except
for its time-step restriction,
\begin{equation}\label{eq:RK-relative-interface}
    \wh{Q}_K^{\mathrm{RK}}
       -\eta'(\overline{\bu}_K)\wh{\mathbf F}_K^{\mathrm{RK}}
    \leq\frac{3\alpha\nu_\eta}{8}V_K^{\mathrm{RK},-}
          -\frac{\alpha\mu_\eta}{8}V_K^{\mathrm{RK},+}.
\end{equation}
\end{lemma}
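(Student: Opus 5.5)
The estimate is obtained stage by stage and then averaged with the RK weights. First, fix a stage $\ell$ and start from the reference-flux identity \eqref{eq:quadratic-interface-reference}. It expresses $\wh{Q}_K^{(\ell)}-\eta'(\overline{\bu}_K)\wh{\mathbf F}_K^{(\ell)}$ as $\frac{h_K}{|K|}\int_{\partial K}$ of the Lax--Friedrichs integrand. By \eqref{eq:LF-relative-quadratic-decomposition}, that integrand splits pointwise on $\partial K$ into two brackets:
\[
\tfrac12\bigl[\alpha\Reta(\bu^{(\ell),-},\overline{\bu}_K)+R_{\Q_{\bn_K}}(\bu^{(\ell),-},\overline{\bu}_K)\bigr]
\quad\text{and}\quad
-\tfrac12\bigl[\alpha\Reta(\bu^{(\ell),+},\overline{\bu}_K)-R_{\Q_{\bn_K}}(\bu^{(\ell),+},\overline{\bu}_K)\bigr].
\]
This decomposition is an algebraic identity once \eqref{eq:lf-flux} and \eqref{eq:lf-entropy-flux} are substituted. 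If needed, I would verify it quickly: the $\eta(\overline{\bu}_K)$, $\Q_{\bn}(\overline{\bu}_K)$ and $\eta'(\overline{\bu}_K)\bfv_{\bn}(\overline{\bu}_K)$ terms redistribute equally between the two traces, and the $\alpha$-terms produce $\pm\frac\alpha2[\eta(\cdot)-\eta'(\overline{\bu}_K)(\cdot)]$, which combine into $\pm\frac\alpha2\Reta$.

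Second, apply Lemma~\ref{lem:LF-quadratic} pointwise on each face. Take $\bw=\bu^{(\ell),-}(\bx)$ or $\bw=\bu^{(\ell),+}(\bx)$, take $\bv=\overline{\bu}_K$, and take $\bn=\bn_K(\bx)$. This is admissible because all these states lie in $\Bcal$. The traces lie in $\Bcal$ by Assumption~\ref{ass:state-range}, including the interior traces of every local stage, the neighbors' stages, and the boundary exterior states. The average $\overline{\bu}_K$ lies in $\Bcal$ by convexity of $\Bcal$. The upper bound $\frac{3\alpha\nu_\eta}{8}|\bu^{(\ell),-}-\overline{\bu}_K|^2$ applies to the first bracket. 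The lower bound $\frac{\alpha\mu_\eta}{8}|\bu^{(\ell),+}-\overline{\bu}_K|^2$ applies to the second bracket, which enters with a minus sign.

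Third, integrate over $\partial K$ and multiply by $h_K/|K|$. Using the definitions \eqref{eq:stage-quadratic-trace-sums}, this gives
\[
\wh{Q}_K^{(\ell)}-\eta'(\overline{\bu}_K)\wh{\mathbf F}_K^{(\ell)}\le \tfrac{3\alpha\nu_\eta}{8}V_K^{(\ell),-}-\tfrac{\alpha\mu_\eta}{8}V_K^{(\ell),+}.
\]
Finally, multiply this inequality by $b_\ell$ and sum over $\ell$. The left side is linear in the stage quantities, so it becomes the RK combination. The weights satisfy $b_\ell\ge0$ by Assumption~\ref{ass:rk-weights}, so the inequality direction is preserved, and the result is \eqref{eq:RK-relative-interface}. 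No time-step restriction is used at any point.

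The only delicate points are keeping the signs straight in the exterior bracket and checking that every trace lies in $\Bcal$ so that Lemma~\ref{lem:LF-quadratic} applies. Both are direct from the stated assumptions, so I expect no real obstacle.
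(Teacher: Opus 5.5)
Your proposal is correct and follows essentially the same route as the paper: apply the upper bound of Lemma~\ref{lem:LF-quadratic} to the interior bracket and the lower bound to the subtracted exterior bracket in \eqref{eq:LF-relative-quadratic-decomposition}, then integrate to obtain the stagewise estimate \eqref{eq:stage-relative-interface}. You then multiply by $b_\ell\geq0$ and sum, exactly as the paper does, and your check that all traces and $\overline{\bu}_K$ lie in $\Bcal$ is the right justification for using the lemma pointwise.
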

\begin{proof}
In \eqref{eq:LF-relative-quadratic-decomposition}, apply the upper bound
in \eqref{eq:LF-entropy-bracket-quadratic} to the interior bracket and
the lower bound to the subtracted exterior bracket. Integrating in
\eqref{eq:quadratic-interface-reference} yields
\begin{equation}\label{eq:stage-relative-interface}
    \wh{Q}_K^{(\ell)}
       -\eta'(\overline{\bu}_K)\wh{\mathbf F}_K^{(\ell)}
    \leq\frac{3\alpha\nu_\eta}{8}V_K^{(\ell),-}
          -\frac{\alpha\mu_\eta}{8}V_K^{(\ell),+}.
\end{equation}
Multiplying by $b_\ell\geq0$, summing, and using
\eqref{eq:flux-sums} and \eqref{eq:stage-quadratic-trace-sums} complete the proof. 
\end{proof}

\subsection{Term III: the squared numerical-flux sum}

By $\int_{\partial K}\bfv_{\bn_K}(\bv)\dd s=\mathbf0$,
\begin{equation}\label{eq:numerical-flux-reference}
    \wh{\mathbf F}_K^{(\ell)}
    =\frac{h_K}{|K|}\int_{\partial K}
       [\wh{\bfv}_{\bn_K}(\bu^{(\ell),-},\bu^{(\ell),+})
                      -\bfv_{\bn_K}(\overline{\bu}_K)]\dd s.
\end{equation}
We first bound the integrand.

\begin{lemma}[Squared normal numerical-flux deviation]
\label{lem:squared-flux-deviation}
For every $\bw_1,\bw_2,\bv\in\Bcal$ and every unit normal $\bn$,
\begin{equation}\label{eq:LF-physical-quadratic}
    |\wh{\bfv}_{\bn}(\bw_1,\bw_2)-\bfv_{\bn}(\bv)|^2
    \leq\frac{9\alpha^2}{8}
                   (|\bw_1-\bv|^2+|\bw_2-\bv|^2).
\end{equation}
\end{lemma}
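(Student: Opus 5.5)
We split the Lax--Friedrichs flux deviation into a central part and a dissipative part, bound each with the mean value theorem and \eqref{eq:directional-flux-bounds}, and then use Assumption~\ref{ass:alpha} to absorb $L_f$ into $\alpha$. By \eqref{eq:lf-flux},
\begin{equation*}
    \wh{\bfv}_{\bn}(\bw_1,\bw_2)-\bfv_{\bn}(\bv)
    =\frac12\bigl[\bfv_{\bn}(\bw_1)-\bfv_{\bn}(\bv)\bigr]
     +\frac12\bigl[\bfv_{\bn}(\bw_2)-\bfv_{\bn}(\bv)\bigr]
     -\frac\alpha2\bigl[(\bw_2-\bv)-(\bw_1-\bv)\bigr],
\end{equation*}
where we have written $\bw_2-\bw_1=(\bw_2-\bv)-(\bw_1-\bv)$. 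Set $\boldsymbol\delta_i=\bw_i-\bv$. Since $\Bcal$ is convex, the segment from $\bv$ to $\bw_i$ lies in $\Bcal$. The mean value theorem in integral form together with \eqref{eq:directional-flux-bounds} then gives $|\bfv_{\bn}(\bw_i)-\bfv_{\bn}(\bv)|\leq L_f|\boldsymbol\delta_i|$.

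Next we use \eqref{eq:alpha-system} and $\kappa_\eta\geq1$, which give $L_f\leq\alpha/(2\kappa_\eta)\leq\alpha/2$. Regroup the deviation as
\begin{equation*}
    \frac12\bigl[\bfv_{\bn}(\bw_1)-\bfv_{\bn}(\bv)+\alpha\boldsymbol\delta_1\bigr]
    +\frac12\bigl[\bfv_{\bn}(\bw_2)-\bfv_{\bn}(\bv)-\alpha\boldsymbol\delta_2\bigr].
\end{equation*}
Each bracket has norm at most $(L_f+\alpha)|\boldsymbol\delta_i|\leq\frac{3\alpha}{2}|\boldsymbol\delta_i|$. The deviation is therefore bounded in norm by $\frac{3\alpha}{4}(|\boldsymbol\delta_1|+|\boldsymbol\delta_2|)$.

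Finally we square and apply $(a+b)^2\leq2(a^2+b^2)$:
\begin{equation*}
    |\wh{\bfv}_{\bn}(\bw_1,\bw_2)-\bfv_{\bn}(\bv)|^2
    \leq\frac{9\alpha^2}{16}\cdot2\bigl(|\boldsymbol\delta_1|^2+|\boldsymbol\delta_2|^2\bigr)
    =\frac{9\alpha^2}{8}\bigl(|\boldsymbol\delta_1|^2+|\boldsymbol\delta_2|^2\bigr),
\end{equation*}
which is exactly \eqref{eq:LF-physical-quadratic}.

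The one point that needs care is the regrouping. If we instead bound the physical-flux parts and the dissipation term $\frac\alpha2|\bw_2-\bw_1|$ separately, the constant comes out larger than $9/8$. The grouping above, which pairs each state's flux increment with its own half of the dissipation, produces the stated constant, with room to spare given $L_f\leq\alpha/2$.
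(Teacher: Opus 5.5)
Your proof is correct and follows the paper's argument: subtract $\bfv_{\bn}(\bv)$, split $\bw_2-\bw_1$ through $\bv$, use the $L_f$-Lipschitz bound from convexity of $\Bcal$ together with $L_f\leq\alpha/2$ to get $\frac{3\alpha}{4}(|\bw_1-\bv|+|\bw_2-\bv|)$, then square and apply $(a+b)^2\leq2(a^2+b^2)$. Your closing remark is inaccurate, though: bounding the dissipation term separately via $|\bw_2-\bw_1|\leq|\bw_1-\bv|+|\bw_2-\bv|$ gives the same constant $(\alpha+L_f)/2$, which is exactly the paper's route, so the regrouping is harmless but not needed.
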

\begin{proof}
Subtract $\bfv_{\bn}(\bv)$ in the Lax--Friedrichs formula and write
$\bw_2-\bw_1=(\bw_2-\bv)-(\bw_1-\bv)$. The bound
\eqref{eq:directional-flux-bounds} and convexity of $\Bcal$ imply
Lipschitz continuity of $\bfv_{\bn}$ with constant $L_f$. Hence
\begin{equation}\label{eq:LF-physical-Lipschitz}
    |\wh{\bfv}_{\bn}(\bw_1,\bw_2)-\bfv_{\bn}(\bv)|
    \leq\frac{\alpha+L_f}{2}(|\bw_1-\bv|+|\bw_2-\bv|)
    \leq\frac{3\alpha}{4}(|\bw_1-\bv|+|\bw_2-\bv|).
\end{equation}
Here $L_f\leq\alpha/(2\kappa_\eta)\leq\alpha/2$.
Squaring and applying $(a+b)^2\leq2(a^2+b^2)$ proves \eqref{eq:LF-physical-quadratic}.
\end{proof}

For any square-integrable $\mathbf g$ on
$\partial K$, the Cauchy--Schwarz inequality gives
\begin{equation}\label{eq:boundary-CS}
    \left|\frac{h_K}{|K|}\int_{\partial K}\mathbf g\dd s\right|^2
    \leq\frac{h_K|\partial K|}{|K|}
          \frac{h_K}{|K|}\norm{\mathbf g}_{\partial K}^2
    \leq C_k\frac{h_K}{|K|}\norm{\mathbf g}_{\partial K}^2.
\end{equation}
The last inequality follows from \eqref{eq:cell-surface-ratio}.

\begin{lemma}[Squared RK-averaged numerical-flux sum]
\label{lem:numerical-flux-sum-quadratic}
Under the state, viscosity, and RK-weight assumptions,
\begin{equation}\label{eq:RK-numerical-flux-sum}
    |\wh{\mathbf F}_K^{\mathrm{RK}}|^2
    \leq\frac{9\alpha^2C_k}{8}
           (V_K^{\mathrm{RK},-}+V_K^{\mathrm{RK},+}).
\end{equation}
\end{lemma}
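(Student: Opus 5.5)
We bound each stage flux sum $\wh{\mathbf F}_K^{(\ell)}$ on its own, and then combine the stages using convexity of $|\cdot|^2$ together with the nonnegative weights $b_\ell$.

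\emph{Single stage.} Fix a stage $\ell$. We start from the reference-subtracted representation \eqref{eq:numerical-flux-reference} and apply the boundary Cauchy--Schwarz inequality \eqref{eq:boundary-CS} with
\[
\mathbf g=\wh{\bfv}_{\bn_K}(\bu^{(\ell),-},\bu^{(\ell),+})-\bfv_{\bn_K}(\overline{\bu}_K).
\]
This gives
\[
|\wh{\mathbf F}_K^{(\ell)}|^2\leq C_k\frac{h_K}{|K|}\norm{\mathbf g}_{\partial K}^2 .
\]
By Assumption~\ref{ass:state-range}, the traces $\bu^{(\ell),\pm}$ and the mean $\overline{\bu}_K$ all lie in $\Bcal$. (For exterior traces on physical boundary faces this is the boundary-state part of the assumption; the mean lies in $\Bcal$ because $\Bcal$ is convex.) We may therefore apply Lemma~\ref{lem:squared-flux-deviation} pointwise on $\partial K$ with $\bw_1=\bu^{(\ell),-}$, $\bw_2=\bu^{(\ell),+}$ and $\bv=\overline{\bu}_K$. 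Integrating over $\partial K$ gives
\[
\frac{h_K}{|K|}\norm{\mathbf g}_{\partial K}^2\leq\frac{9\alpha^2}{8}\bigl(V_K^{(\ell),-}+V_K^{(\ell),+}\bigr),
\]
and hence
\[
|\wh{\mathbf F}_K^{(\ell)}|^2\leq\frac{9\alpha^2C_k}{8}\bigl(V_K^{(\ell),-}+V_K^{(\ell),+}\bigr).
\]

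\emph{Combining the stages.} By Assumption~\ref{ass:rk-weights}, $\wh{\mathbf F}_K^{\mathrm{RK}}=\sum_\ell b_\ell\wh{\mathbf F}_K^{(\ell)}$ is a convex combination of the stage sums. Jensen's inequality for the convex function $|\cdot|^2$ (equivalently, Cauchy--Schwarz with weights $b_\ell$ summing to one) gives
\[
|\wh{\mathbf F}_K^{\mathrm{RK}}|^2\leq\sum_\ell b_\ell|\wh{\mathbf F}_K^{(\ell)}|^2.
\]
Inserting the single-stage bound and recalling the definitions of $V_K^{\mathrm{RK},\pm}$ in \eqref{eq:stage-quadratic-trace-sums} yields \eqref{eq:RK-numerical-flux-sum}.

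\emph{Where care is needed.} No step here is a real obstacle. The only point to watch is the order of operations. Convexity must be applied to the weighted sum before any stagewise estimate is used, and this is exactly where the sign condition $b_\ell\ge0$ and the normalization $\sum_\ell b_\ell=1$ enter. Without them, Jensen's inequality would fail and the RK combination would lose the factor structure.
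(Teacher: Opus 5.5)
Your proposal is correct and follows the paper's proof: the boundary Cauchy--Schwarz inequality and the pointwise Lax--Friedrichs bound of Lemma~\ref{lem:squared-flux-deviation} give the stagewise estimate, and weighted Cauchy--Schwarz with $b_\ell\geq0$, $\sum_\ell b_\ell=1$ combines the stages. Your closing remark about the order of operations is unnecessary, since the stagewise bound and the convexity step are independent and can be done in either order --- as you in fact did.
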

\begin{proof}
Apply \eqref{eq:boundary-CS} to \eqref{eq:numerical-flux-reference}, followed by
Lemma~\ref{lem:squared-flux-deviation} pointwise on each face:
\begin{equation}\label{eq:stage-numerical-flux-sum}
    |\wh{\mathbf F}_K^{(\ell)}|^2
    \leq\frac{9\alpha^2C_k}{8}
                  (V_K^{(\ell),-}+V_K^{(\ell),+}).
\end{equation}
By weighted Cauchy--Schwarz and Assumption~\ref{ass:rk-weights},
\begin{equation}\label{eq:RK-Jensen-flux}
\begin{aligned}
    |\wh{\mathbf F}_K^{\mathrm{RK}}|^2
    &=\left|\sum_{\ell=1}^s b_\ell\wh{\mathbf F}_K^{(\ell)}\right|^2 \leq\left(\sum_{\ell=1}^s b_\ell\right)
            \sum_{\ell=1}^s b_\ell|\wh{\mathbf F}_K^{(\ell)}|^2
     =\sum_{\ell=1}^s b_\ell|\wh{\mathbf F}_K^{(\ell)}|^2.
\end{aligned}
\end{equation}
Insert \eqref{eq:stage-numerical-flux-sum} and use
\eqref{eq:stage-quadratic-trace-sums} to obtain \eqref{eq:RK-numerical-flux-sum}.
\end{proof}

It remains to control $V_K^{\mathrm{RK},-}$ by the Jensen gap; the
exterior-trace contribution is nonpositive under \eqref{eq:higher-order-CFL}.

\subsection{Control of the local stages by the Jensen gap}

\begin{lemma}[Local operator relative to a constant state]
\label{lem:local-operator-general-k}
Let $\bw\in[\mathbb Z^k(K)]^m$ take values in $\Bcal$.
For every constant vector $\bv\in\mathbb R^m$,
\begin{equation}\label{eq:local-operator-general-k}
    \norm{\loc(\bw)}_K
    \leq\frac{C_kL_f}{h_K}\norm{\bw-\bv}_K.
\end{equation}
\end{lemma}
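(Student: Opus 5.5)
The plan is to show that $\loc(\bw)$ is the $L^2(K)$-projection of the strong-form residual $-\sum_r\bfv_r'(\bw)\partial_{x_r}\bw$. The bound then follows from the flux bound \eqref{eq:directional-flux-bounds} and the inverse estimate in \eqref{eq:inverse-trace}. The constant $\bv$ enters only because $\nabla\bw=\nabla(\bw-\bv)$.

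\emph{Step 1 (integration by parts).} Fix a test function $\bv_h\in[\mathbb Z^k(K)]^m$. Since $\bw$ is a polynomial with values in $\Bcal$, and each $\bfv_r$ is $C^2$ on a neighborhood of $\Bcal$ (Assumption~\ref{ass:state-range}), the composition $\bfv_r(\bw)$ is $C^1$ on $\overline K$. The divergence theorem therefore gives
\begin{equation*}
\sum_{r=1}^d\int_K\bfv_r(\bw)\cdot\partial_{x_r}\bv_h\dd\bx
=\int_{\partial K}\bfv_{\bn_K}(\bw^-)\cdot\bv_h^-\dd s
-\int_K\sum_{r=1}^d\bfv_r'(\bw)\,\partial_{x_r}\bw\cdot\bv_h\dd\bx .
\end{equation*}
Here the interior trace is the right one, because $\loc$ uses only the cell's own polynomial. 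The boundary term cancels exactly against the boundary term in \eqref{eq:local-operator}. This cancellation is the structural point, and it is what distinguishes $\loc$ from $\DG$. We obtain
\begin{equation*}
\int_K\loc(\bw)\cdot\bv_h\dd\bx=-\int_K\mathbf r\cdot\bv_h\dd\bx,
\qquad \mathbf r:=\sum_{r=1}^d\bfv_r'(\bw)\,\partial_{x_r}\bw .
\end{equation*}

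\emph{Step 2 (projection bound).} Since $\loc(\bw)|_K\in[\mathbb Z^k(K)]^m$, we may choose $\bv_h=\loc(\bw)$. Cauchy--Schwarz then gives $\norm{\loc(\bw)}_K^2\le\norm{\mathbf r}_K\norm{\loc(\bw)}_K$, hence $\norm{\loc(\bw)}_K\le\norm{\mathbf r}_K$. Because $\bw(\bx)\in\Bcal$, the second inequality in \eqref{eq:directional-flux-bounds}, applied pointwise with $\mathbf z_r=\partial_{x_r}\bw(\bx)$, yields $|\mathbf r(\bx)|\le L_f|\nabla\bw(\bx)|$. Squaring and integrating over $K$ gives $\norm{\mathbf r}_K\le L_f\norm{\nabla\bw}_K$.

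\emph{Step 3 (inverse estimate).} For any constant $\bv$ we have $\nabla\bw=\nabla(\bw-\bv)$, and $\bw-\bv\in[\mathbb Z^k(K)]^m$. The first inequality in \eqref{eq:inverse-trace} then gives $\norm{\nabla\bw}_K\le C_kh_K^{-1}\norm{\bw-\bv}_K$. Combining Steps 2 and 3 proves \eqref{eq:local-operator-general-k}. No condition $\bv\in\Bcal$ is needed, since the flux is only ever evaluated at $\bw$.

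I expect no serious obstacle. The only point requiring care is Step 1: the integration by parts must be legitimate, which needs $C^1$ regularity of $\bfv_r(\bw)$ up to $\partial K$, and this comes from $\bw(\overline K)\subset\Bcal$. The trace in the boundary term must also be the interior one, so that it cancels exactly. With exact integration both hold directly. It is precisely this cancellation that would fail under quadrature, which explains why Remark~\ref{rem:quadrature-entropy} calls for a modified proof of this lemma.
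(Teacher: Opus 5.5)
Your proposal is correct and follows the paper's proof essentially verbatim: integration by parts identifies $\loc(\bw)$ as the $L^2$ projection of $-\sum_r\bfv_r'(\bw)\partial_{x_r}\bw$, then $L^2$-contractivity (which you verify directly by testing with $\loc(\bw)$), the pointwise flux bound \eqref{eq:directional-flux-bounds}, and the inverse estimate applied to $\bw-\bv$ give the result. Your added remarks on the regularity needed for the divergence theorem and on why $\bv\in\Bcal$ is not required are accurate.
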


\begin{proof}
Let $\Pi_K$ be the $L^2$ projection onto
$[\mathbb Z^k(K)]^m$. Integration by parts in \eqref{eq:local-operator} gives
$\loc(\bw)=-\Pi_K\!\left(
       \sum_{r=1}^d\bfv_r'(\bw)\partial_{x_r}\bw\right).$
By $L^2$-contractivity, \eqref{eq:directional-flux-bounds}, and
\eqref{eq:inverse-trace},
\begin{equation}\label{eq:local-operator-bound-proof}
    \norm{\loc(\bw)}_K
       \leq L_f\norm{\nabla\bw}_K
       =L_f\norm{\nabla(\bw-\bv)}_K
       \leq\frac{C_kL_f}{h_K}\norm{\bw-\bv}_K.
\end{equation}
\end{proof}

\begin{lemma}[Uniform control of local stages]
\label{lem:all-stages}
Under the assumptions of Theorem~\ref{thm:weak-entropy}, except
for its time-step restriction, suppose $C_ka_*\sigma_K\leq1$.
Then
\begin{equation}\label{eq:all-stage-L2}
    \norm{\bu^{(\ell)}-\overline{\bu}_K}_K
       \leq2\norm{\bu-\overline{\bu}_K}_K,
       \qquad \ell=1,\ldots,s.
\end{equation}
\end{lemma}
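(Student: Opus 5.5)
We argue by strong induction on the stage index $\ell$, using the stage formula \eqref{eq:general-crkdg-b} together with Lemma~\ref{lem:local-operator-general-k} applied with the constant reference state $\bv=\overline{\bu}_K$. The key point is that the intermediate stages on $K$ depend only on $\bu|_K$, since $\loc$ uses only interior traces. So every quantity stays inside the single cell, and no neighboring data enters.

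Set $D:=\norm{\bu-\overline{\bu}_K}_K$ and $D_\ell:=\norm{\bu^{(\ell)}-\overline{\bu}_K}_K$. For $\ell=1$ we have $\bu^{(1)}=\bu$, so $D_1=D\leq2D$ trivially. Now fix $\ell\geq2$ and assume $D_j\leq2D$ for all $j<\ell$. Subtracting $\overline{\bu}_K$ from \eqref{eq:general-crkdg-b} and applying the triangle inequality gives
\begin{equation*}
    D_\ell\leq D+\tau\sum_{j=1}^{\ell-1}|a_{\ell j}|\,\norm{\loc(\bu^{(j)})}_K .
\end{equation*}
Each $\bu^{(j)}$ is a polynomial with values in $\Bcal$ by Assumption~\ref{ass:state-range}. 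Lemma~\ref{lem:local-operator-general-k} with $\bv=\overline{\bu}_K$ therefore gives $\norm{\loc(\bu^{(j)})}_K\leq C_kL_fh_K^{-1}D_j\leq 2C_kL_fh_K^{-1}D$. Hence
\begin{equation*}
    D_\ell\leq D\bigl(1+2C_ka_*\lambda_KL_f\bigr).
\end{equation*}

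It remains to show that $2C_ka_*\lambda_KL_f\leq1$. Assumption~\ref{ass:alpha} gives $L_f\leq\alpha/(2\kappa_\eta)\leq\alpha/2$, since $\kappa_\eta\geq1$. Therefore $2C_ka_*\lambda_KL_f\leq C_ka_*\alpha\lambda_K=C_ka_*\sigma_K\leq1$ by hypothesis, so $D_\ell\leq2D$, which closes the induction.

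The only delicate point is the constant bookkeeping. We need the factor $2$ coming from the induction hypothesis to be absorbed by the factor $1/2$ in $L_f\leq\alpha/2$, so that the right-hand side is exactly $2D$ rather than something larger. This does not compound over stages, because every stage is compared directly with $\bu$ through \eqref{eq:general-crkdg-b}, and not recursively with the previous stage.
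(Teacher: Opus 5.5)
Your proof is correct and matches the paper's argument essentially step for step. Both use induction on $\ell$, the triangle inequality applied to the stage recurrence, Lemma~\ref{lem:local-operator-general-k} with $\bv=\overline{\bu}_K$, and absorption of the inductive factor $2$ via $2L_f\leq\alpha$ together with $C_ka_*\sigma_K\leq1$.
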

\begin{proof}
The estimate holds for $\bu^{(1)}=\bu$. Suppose it holds for all $j<\ell$.
The recurrence \eqref{eq:general-crkdg-b} and
Lemma~\ref{lem:local-operator-general-k} give
\begin{equation}\label{eq:stage-recursion-first}
\begin{aligned}
    \norm{\bu^{(\ell)}-\overline{\bu}_K}_K
    &\leq\norm{\bu-\overline{\bu}_K}_K
       +C_kL_f\lambda_K\sum_{j<\ell}|a_{\ell j}|
          \norm{\bu^{(j)}-\overline{\bu}_K}_K\\
    &\leq(1+2C_ka_*L_f\lambda_K)\norm{\bu-\overline{\bu}_K}_K
     \leq2\norm{\bu-\overline{\bu}_K}_K,
\end{aligned}
\end{equation}
since $2L_f\leq\alpha$ and $C_ka_*\sigma_K\leq1$.
\end{proof}

\begin{corollary}[The Jensen gap controls the stage traces]
\label{cor:all-stage-traces}
Under the hypotheses of Lemma~\ref{lem:all-stages},
\begin{align}
    V_K^{(\ell),-}
       &\leq\frac{4C_k}{|K|}\norm{\bu-\overline{\bu}_K}_K^2
        \leq\frac{8C_k}{\mu_\eta}J_K(\bu),
        \qquad \ell=1,\ldots,s,\label{eq:all-stage-traces}\\
    V_K^{\mathrm{RK},-}&\leq\frac{8C_k}{\mu_\eta}J_K(\bu).
        \label{eq:RK-interior-trace-J}
\end{align}
\end{corollary}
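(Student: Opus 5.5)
The plan is to apply the trace estimate in \eqref{eq:inverse-trace} to the polynomial $\mathbf z:=\bu^{(\ell)}-\overline{\bu}_K$, then use Lemma~\ref{lem:all-stages} and the lower bound of Lemma~\ref{lem:jensen-gap}. Two facts make this legitimate. First, $\overline{\bu}_K$ is constant, so $\mathbf z\in[\mathbb Z^k(K)]^m$. Second, the stages are cell-local: by \eqref{eq:general-crkdg-b}, each $\bu^{(\ell)}$ restricted to $K$ depends only on $\bu|_K$, and $V_K^{(\ell),-}$ involves only the interior trace of $\mathbf z$.

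By definition \eqref{eq:stage-quadratic-trace-sums} and the trace inequality,
\begin{equation*}
    V_K^{(\ell),-}=\frac{h_K}{|K|}\norm{\bu^{(\ell)}-\overline{\bu}_K}_{\partial K}^2
    \leq\frac{C_k}{|K|}\norm{\bu^{(\ell)}-\overline{\bu}_K}_K^2 .
\end{equation*}
Lemma~\ref{lem:all-stages} applies because $C_ka_*\sigma_K\leq1$. It gives $\norm{\bu^{(\ell)}-\overline{\bu}_K}_K^2\leq4\norm{\bu-\overline{\bu}_K}_K^2$, which yields the first inequality in \eqref{eq:all-stage-traces}.

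The lower bound in \eqref{eq:J-bounds} gives $\norm{\bu-\overline{\bu}_K}_K^2/|K|\leq 2J_K(\bu)/\mu_\eta$. Multiplying by $4C_k$ gives the second inequality, with constant $8C_k/\mu_\eta$.

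For \eqref{eq:RK-interior-trace-J}, multiply the stagewise bound by $b_\ell\geq0$ and sum over $\ell$. Using $\sum_\ell b_\ell=1$ from Assumption~\ref{ass:rk-weights}, the right-hand side is unchanged.

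None of these steps is a real obstacle, since everything is a direct chaining of earlier results. The only points needing care are checking that the trace estimate is applied to a genuine element of the local polynomial space, and that the nonnegativity of the weights $b_\ell$ is invoked when averaging.
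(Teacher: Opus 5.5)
Your proposal is correct and follows the paper's proof exactly. You apply the trace estimate to $\bu^{(\ell)}-\overline{\bu}_K$, then the stage bound of Lemma~\ref{lem:all-stages}, then the lower Jensen-gap bound in \eqref{eq:J-bounds}, and finally take the convex combination with weights $b_\ell\geq0$, $\sum_\ell b_\ell=1$. The constants $4C_k$ and $8C_k/\mu_\eta$ come out as stated.
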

\begin{proof}
Apply the trace estimate \eqref{eq:inverse-trace},
Lemma~\ref{lem:all-stages}, and the Jensen-gap bound \eqref{eq:J-bounds}.
The RK estimate follows because $V_K^{\mathrm{RK},-}$ is a convex
combination of the $V_K^{(\ell),-}$.
\end{proof}

\subsection{Completion of the entropy estimate}

\begin{proof}[Proof of Theorem~\ref{thm:weak-entropy}]
The CFL condition implies $C_ka_*\sigma_K\leq1$, so
Corollary~\ref{cor:all-stage-traces} applies. Substituting
\eqref{eq:RK-relative-interface}, \eqref{eq:RK-numerical-flux-sum}, and
\eqref{eq:RK-interior-trace-J} into \eqref{eq:master-decomposition-expanded} gives
\begin{equation}\label{eq:higher-order-defect-J}
\begin{aligned}
 &\eta(\overline{\bu}_K^{n+1,\star})
    -\fint_K\eta(\bu)\dd\bx
    +\lambda_K\wh{Q}_K^{\mathrm{RK}}\\
 &\quad\leq
    -\left[1-3\kappa_\eta C_k\sigma_K
                     \left(1+\frac32C_k\sigma_K\right)\right]J_K(\bu)-\frac{\mu_\eta\sigma_K}{8}
          \left(1-\frac92\kappa_\eta C_k\sigma_K\right)
                                     V_K^{\mathrm{RK},+}.
\end{aligned}
\end{equation}
Since $\kappa_\eta C_k\sigma_K\leq1/9$ and $\kappa_\eta\geq1$,
\begin{equation}\label{eq:higher-order-coefficient-bounds}
\begin{aligned}
 3\kappa_\eta C_k\sigma_K
          \left(1+\frac32C_k\sigma_K\right)
   \leq\frac13\left(1+\frac1{6\kappa_\eta}\right)
    \leq\frac7{18},\quad 
 1-\frac92\kappa_\eta C_k\sigma_K&\geq\frac12.
\end{aligned}
\end{equation}
The nonnegativity of $J_K(\bu)$ and $V_K^{\mathrm{RK},+}$ now yields
\begin{equation}\label{eq:higher-order-final-residual}
\begin{aligned}
 &\eta(\overline{\bu}_K^{n+1,\star})
    -\fint_K\eta(\bu)\dd\bx
    +\lambda_K\wh{Q}_K^{\mathrm{RK}}\leq-\frac{11}{18}J_K(\bu)
           -\frac{\mu_\eta\sigma_K}{16}V_K^{\mathrm{RK},+}
    \leq0.
\end{aligned}
\end{equation}
Using \eqref{eq:entropy-flux-sums} proves \eqref{eq:higher-order-weak}.
\end{proof}

\section{Strong multi-entropy stability}
\label{sec:entropy-limiter}

We next extend the weak cell entropy inequality to a prescribed family of
entropies and use a cell-average-preserving scaling to obtain a fully
discrete cRKDG scheme satisfying the corresponding strong cell entropy
inequalities simultaneously.

\subsection{Weak multi-entropy stability}
\label{subsec:weak-multi-entropy}

Denote by $\bu_h^n$ the solution at time level $t^n$ and by
$\bu_h^{n+1,\star}$ the pre-limiter solution in
\eqref{eq:general-crkdg-c}.

Let $\mathcal I$ be a fixed index set, finite or infinite,
and consider the entropy family
\begin{equation}\label{eq:entropy-family}
    \left\{(\eta_q,\boldsymbol{\mathcal Q}_q)\right\}_{q\in\mathcal I},
    \qquad
    \boldsymbol{\mathcal Q}_q=(\Q_{q,1},\ldots,\Q_{q,d}).
\end{equation}

\begin{assumption}[A uniformly conditioned entropy family]
\label{ass:multi-entropies}
For every $q\in\mathcal I$, the pair $(\eta_q,\boldsymbol{\mathcal Q}_q)$ is
$C^2$ on an open neighborhood of $\Bcal$ and satisfies
\begin{equation}\label{eq:multi-entropy-compatibility}
    \Q_{q,r}'(\bv)=\eta_q'(\bv)\bfv_r'(\bv),
    \qquad \bv\in\Bcal,\quad r=1,\ldots,d.
\end{equation}
There are constants $0<\mu_{\eta_q}\leq\nu_{\eta_q}<\infty$ such that
\begin{equation}\label{eq:multi-entropy-convexity}
    \mu_{\eta_q}I_m\preceq\eta_q''(\bv)\preceq\nu_{\eta_q}I_m,
    \qquad \bv\in\Bcal.
\end{equation}
The curvature ratios satisfy the uniform bound
\begin{equation}\label{eq:multi-entropy-conditioning}
    \kappa_{\eta_q}:=\frac{\nu_{\eta_q}}{\mu_{\eta_q}},
    \qquad
    \kappa_*:=\sup_{q\in\mathcal I}\kappa_{\eta_q}<\infty.
\end{equation}
\end{assumption}
The curvature constants may depend on $q$; only their ratios must be
uniformly bounded. For a finite family, the supremum in
\eqref{eq:multi-entropy-conditioning} is a maximum.

Impose the common sufficient conditions
\begin{equation}\label{eq:multi-entropy-CFL}
    \alpha\geq2\kappa_*L_f,
    \qquad
    \sigma_K\leq{(C_k\max\{a_*,9\kappa_*\})^{-1}},
    \qquad K\in\mathcal T_h.
\end{equation}
For $q\in\mathcal I$, set $\Q_{q,\bn}:=\sum_{r=1}^d n_r\Q_{q,r}$
and define $\wh{\Q}_{q,\bn}$ and
$\wh{\Q}_{q,\bn_K}^{n,\mathrm{RK}}$ by
\eqref{eq:lf-entropy-flux} and \eqref{eq:rk-entropy-flux-general},
with $(\eta,\Q_r)$ replaced by $(\eta_q,\Q_{q,r})$.
All entropy fluxes use the same $\alpha$ and the same local stages.

\begin{lemma}[Weak multi-entropy stability]
\label{lem:weak-multi-entropy}
Under Assumptions~\ref{ass:state-range}, \ref{ass:rk-weights},
and~\ref{ass:multi-entropies}, suppose
\eqref{eq:multi-entropy-CFL} holds. Then, for every
$K\in\mathcal T_h$ and $q\in\mathcal I$,
\begin{equation}\label{eq:weak-budget-form}
    \eta_q\!\left(\overline{\bu}_{h,K}^{n+1,\star}\right)
    \leq\fint_K\eta_q(\bu_h^n)\dd\bx
      -\frac{\tau}{|K|}\int_{\partial K}
                    \wh{\Q}_{q,\bn_K}^{n,\mathrm{RK}}\dd s.
\end{equation}
\end{lemma}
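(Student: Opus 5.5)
The plan is to reduce Lemma~\ref{lem:weak-multi-entropy} to Theorem~\ref{thm:weak-entropy}, applied separately to each entropy pair $(\eta_q,\boldsymbol{\mathcal Q}_q)$ with $q\in\mathcal I$ fixed. The proof of Theorem~\ref{thm:weak-entropy} uses the entropy only through five ingredients:
\begin{itemize}
\item[(i)] $C^2$ regularity on a neighborhood of $\Bcal$;
\item[(ii)] the compatibility relation \eqref{eq:entropy-pair}, needed for \eqref{eq:RQ-integral};
\item[(iii)] the Hessian bounds \eqref{eq:hessian-bounds} with constants $\mu_\eta,\nu_\eta$;
\item[(iv)] the dissipation condition $\alpha\geq2\kappa_\eta L_f$;
\item[(v)] the CFL condition $\sigma_K\leq(C_k\max\{a_*,9\kappa_\eta\})^{-1}$.
\end{itemize}
The numerical scheme itself (the stages $\bu^{(\ell)}$, $\alpha$, $\tau$, and the cell averages) does not depend on $q$. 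Only the entropy flux $\wh{\Q}_{q,\bn}$ and the budget do, and these are built from \eqref{eq:lf-entropy-flux} with the same $\alpha$ and the same stages.

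For fixed $q$, I will check each hypothesis with $\eta=\eta_q$. Assumption~\ref{ass:state-range} supplies the state range. Regularity and compatibility come from Assumption~\ref{ass:multi-entropies} via \eqref{eq:multi-entropy-compatibility}, and Assumption~\ref{ass:entropy} holds with $\mu_{\eta_q},\nu_{\eta_q}$ by \eqref{eq:multi-entropy-convexity}. Assumption~\ref{ass:rk-weights} is assumed directly. For Assumption~\ref{ass:alpha}, note $\kappa_{\eta_q}\leq\kappa_*$, so $\alpha\geq2\kappa_*L_f\geq2\kappa_{\eta_q}L_f$. For the CFL condition, $\max\{a_*,9\kappa_{\eta_q}\}\leq\max\{a_*,9\kappa_*\}$, so \eqref{eq:multi-entropy-CFL} implies \eqref{eq:higher-order-CFL} with $\kappa_\eta=\kappa_{\eta_q}$.

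Theorem~\ref{thm:weak-entropy} then gives \eqref{eq:higher-order-weak} for $\eta_q$. In the notation of \eqref{eq:rk-entropy-flux-general}, with $(\eta,\Q_r)$ replaced by $(\eta_q,\Q_{q,r})$, this is exactly \eqref{eq:weak-budget-form}. Since $q$ was arbitrary, the inequality holds for all $q\in\mathcal I$ at once, whether $\mathcal I$ is finite or infinite. No uniformity in $\mu_{\eta_q}$ or $\nu_{\eta_q}$ separately is needed, because the only $q$-dependence in the conditions enters through $\kappa_{\eta_q}$. This is why a bound on $\kappa_*$ suffices.

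The main point to check is that the constant $C_k$ and the quantity $L_f$ are independent of the entropy. $C_k$ comes only from the inverse and trace estimates, and $L_f$ depends only on the fluxes $\bfv_r$ and on $\Bcal$. A second check is that the proof of Theorem~\ref{thm:weak-entropy} uses $\kappa_\eta\geq1$ and the curvature constants only through $\kappa_\eta$ in the final coefficient bounds \eqref{eq:higher-order-coefficient-bounds}, where $\mu_\eta$ multiplies a nonpositive term. Both hold automatically, so I expect no genuine obstacle.
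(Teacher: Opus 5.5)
Your proposal is correct and matches the paper's proof. Fix $q$, use $\kappa_{\eta_q}\le\kappa_*$ to deduce Assumption~\ref{ass:alpha} and the CFL condition \eqref{eq:higher-order-CFL} for $(\eta_q,\boldsymbol{\mathcal Q}_q)$ from \eqref{eq:multi-entropy-CFL}, and then apply Theorem~\ref{thm:weak-entropy}; your further checks (regularity, compatibility, Hessian bounds, and that $C_k$, $L_f$, and the stages do not depend on $q$) spell out what the paper's one-line proof leaves implicit.
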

\begin{proof}
For each $q\in\mathcal I$, the bound
$\kappa_{\eta_q}\leq\kappa_*$ ensures that
\eqref{eq:multi-entropy-CFL} implies the hypotheses of
Theorem~\ref{thm:weak-entropy} for that entropy pair.
The theorem then gives \eqref{eq:weak-budget-form}.
\end{proof}

\subsection{Entropy correction by cellwise scaling}

We apply the explicit cell-average-preserving scaling in \cite{LiuGuoJiangSun,LiuJiangSun2026} to the family
in \eqref{eq:entropy-family}. Only the end-of-step solution is modified;
the local stages and their numerical fluxes remain unchanged.

For each $q\in\mathcal I$, define the entropy budget
\begin{equation}\label{eq:entropy-budget}
    B_{K,q}^n:=\fint_K\eta_q(\bu_h^n)\dd\bx
       -\frac{\tau}{|K|}\int_{\partial K}
                    \wh{\Q}_{q,\bn_K}^{n,\mathrm{RK}}\dd s.
\end{equation}
For $0\leq\theta\leq1$, introduce the common scaling path and its
entropy profiles:
\begin{align}
    \bu_{h,K}^{n,\theta}(\bx)
    :=&\overline{\bu}_{h,K}^{n+1,\star}
       +\theta\left(\bu_h^{n+1,\star}(\bx)
                      -\overline{\bu}_{h,K}^{n+1,\star}\right),
    \qquad \bx\in\overline K,\label{eq:entropy-scaling-path}\\
    \Psi_{K,q}(\theta):=&\fint_K
                   \eta_q(\bu_{h,K}^{n,\theta})\dd\bx.\label{eq:Psi-def}
\end{align}
By construction, the path preserves the cell average $\overline{\bu}_{h,K}^{n,\theta} = \overline{\bu}_{h,K}^{n+1,\star}$. Assumption~\ref{ass:state-range} and the convexity of $\Bcal$
ensure that all scaled states remain in $\Bcal$. Lemma~\ref{lem:weak-multi-entropy}, convexity of $\eta_q$,
and Jensen's inequality give
\begin{align}
    \Psi_{K,q}(0)&=\eta_q\!\left(\overline{\bu}_{h,K}^{n+1,\star}\right)
       \leq B_{K,q}^n,\label{eq:Psi-endpoints}\\
    \Psi_{K,q}(0)&\leq\Psi_{K,q}(\theta)
       \leq\Psi_{K,q}(0)+\theta[\Psi_{K,q}(1)-\Psi_{K,q}(0)].
       \label{eq:Psi-profile-bounds}
\end{align}
Set
\begin{equation}\label{eq:explicit-theta}
    \theta_{K,q}:=
    \begin{cases}
       1, & \Psi_{K,q}(1)\leq B_{K,q}^n,\\[0.5em]
       \displaystyle
       \frac{B_{K,q}^n-\Psi_{K,q}(0)}
            {\Psi_{K,q}(1)-\Psi_{K,q}(0)},
          & \Psi_{K,q}(1)>B_{K,q}^n,
    \end{cases}
\end{equation}
and use a single coefficient for all entropies:
\begin{equation}\label{eq:multi-entropy-theta}
    \theta_K:=\inf_{q\in\mathcal I}\theta_{K,q}.
\end{equation}
The active branch has a positive denominator and a numerator between
zero and that denominator. Thus $0\leq\theta_K\leq\theta_{K,q}\leq1$;
if $\Psi_{K,q}(1)=\Psi_{K,q}(0)$, the inactive branch applies. For finite
$\mathcal I$, \eqref{eq:multi-entropy-theta} is a minimum; for infinite
$\mathcal I$, the infimum need not be attained.

\begin{remark}[Round-off and limiter activation]\label{rem:roundoff}
In floating-point arithmetic, the computed entropy budget $B_{K,q}^n$
and entropy values $\Psi_{K,q}$ have errors of size
$\mathcal{O}(\epsilon S_{K,q})$, where $\epsilon$ is the machine
precision and $S_{K,q}$ is a representative scale of the quantities
entering the budget. Hence a round-off-sized excess
$\Psi_{K,q}(1)-B_{K,q}^n$ may spuriously activate the limiter. Near a
constant state, this can produce an essentially arbitrary
$\theta_{K,q}$ because both the numerator and denominator in
\eqref{eq:explicit-theta} are at round-off level; for a smooth
well-resolved solution, repeated round-off-sized corrections may also
affect accuracy on sufficiently fine meshes. We therefore activate the
limiter only when
$\Psi_{K,q}(1)-B_{K,q}^n>100\epsilon S_{K,q}$ 
and set $\theta_{K,q}=1$ otherwise. As a safeguard, we replace $\theta_K$ with
$\max\{0,\min\{1,\theta_K\}\}$.
\end{remark}
\subsection{Fully discrete multi-entropy-stable cRKDG scheme}
Given $\bu_h^n$, first compute the local stages and the pre-limiter
solution $\bu_h^{n+1,\star}$ from \eqref{eq:general-crkdg}. For each cell
$K$, compute the entropy budgets \eqref{eq:entropy-budget} and the common
coefficient $\theta_K$ from
\eqref{eq:explicit-theta}--\eqref{eq:multi-entropy-theta}. The new solution is
\begin{equation}\label{eq:post-limiter-solution}
    \bu_h^{n+1}|_K
    :=\overline{\bu}_{h,K}^{n+1,\star}
      +\theta_K\left(\bu_h^{n+1,\star}
        -\overline{\bu}_{h,K}^{n+1,\star}\right).
\end{equation}
We refer to \eqref{eq:general-crkdg} together with
\eqref{eq:entropy-budget}, \eqref{eq:explicit-theta}--\eqref{eq:post-limiter-solution}
as the fully discrete multi-entropy-stable cRKDG scheme.

\begin{theorem}[Strong multi-entropy stability]
\label{thm:strong-cell-entropy}
Under Assumptions~\ref{ass:state-range}, \ref{ass:rk-weights},
and~\ref{ass:multi-entropies}, suppose
\eqref{eq:multi-entropy-CFL} holds. The solution produced by the fully
discrete multi-entropy-stable cRKDG scheme belongs to $\mathbb V_h^k$ and satisfies, for every
$K\in\mathcal T_h$ and $q\in\mathcal I$,
\begin{equation}\label{eq:strong-cell-entropy}
    \fint_K\eta_q(\bu_h^{n+1})\dd\bx
    \leq\fint_K\eta_q(\bu_h^n)\dd\bx
       -\frac{\tau}{|K|}\int_{\partial K}
                    \wh{\Q}_{q,\bn_K}^{n,\mathrm{RK}}\dd s.
\end{equation}
Moreover, the conservative cell-average update is unchanged:
\begin{equation}\label{eq:post-limiter-average}
    \overline{\bu}_{h,K}^{\,n+1}
    =\overline{\bu}_{h,K}^{n+1,\star}
    =\overline{\bu}_{h,K}^{\,n}
       -\frac{\tau}{|K|}\int_{\partial K}
                    \wh{\bfv}_{\bn_K}^{n,\mathrm{RK}}\dd s.
\end{equation}
\end{theorem}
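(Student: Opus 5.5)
The plan is to exploit the structure of the scaling path \eqref{eq:entropy-scaling-path} together with the weak multi-entropy inequality of Lemma~\ref{lem:weak-multi-entropy}.

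\textbf{Membership in $\mathbb V_h^k$ and average preservation.} On each cell, $\bu_h^{n+1}|_K$ is a constant plus $\theta_K$ times a polynomial in $[\mathbb Z^k(K)]^m$, so $\bu_h^{n+1}\in\mathbb V_h^k$. Averaging \eqref{eq:post-limiter-solution} over $K$ kills the zero-mean fluctuation and gives $\overline{\bu}_{h,K}^{\,n+1}=\overline{\bu}_{h,K}^{n+1,\star}$. Combining this with \eqref{eq:crkdg-mean-update} yields \eqref{eq:post-limiter-average}.

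\textbf{Reduction to a single entropy.} Fix $q\in\mathcal I$. By construction, $\fint_K\eta_q(\bu_h^{n+1})\dd\bx=\Psi_{K,q}(\theta_K)$ and the right-hand side of \eqref{eq:strong-cell-entropy} is $B_{K,q}^n$. So it suffices to show $\Psi_{K,q}(\theta_K)\le B_{K,q}^n$. The scaled states are convex combinations of $\overline{\bu}_{h,K}^{n+1,\star}\in\Bcal$ and $\bu_h^{n+1,\star}(\bx)\in\Bcal$, so they lie in $\Bcal$ by convexity, and $\eta_q$ is defined and convex there. The chord bound \eqref{eq:Psi-profile-bounds} follows from convexity of $\theta\mapsto\eta_q(\bu_{h,K}^{n,\theta}(\bx))$ pointwise, after averaging; Jensen gives the lower bound. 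Lemma~\ref{lem:weak-multi-entropy} gives \eqref{eq:Psi-endpoints}, namely $\Psi_{K,q}(0)\le B_{K,q}^n$.

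\textbf{Case split.} Since $0\le\theta_K\le\theta_{K,q}\le1$, the chord bound gives
\[
\Psi_{K,q}(\theta_K)\le\Psi_{K,q}(0)+\theta_K[\Psi_{K,q}(1)-\Psi_{K,q}(0)].
\]
\emph{Inactive branch} ($\Psi_{K,q}(1)\le B_{K,q}^n$): the right side is a convex combination of two quantities each at most $B_{K,q}^n$, so the claim follows.

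\emph{Active branch} ($\Psi_{K,q}(1)>B_{K,q}^n\ge\Psi_{K,q}(0)$): the bracket is positive, so the right side is increasing in $\theta_K$. Bounding $\theta_K\le\theta_{K,q}$ gives exactly $\Psi_{K,q}(0)+(B_{K,q}^n-\Psi_{K,q}(0))=B_{K,q}^n$.

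\textbf{Main subtlety.} The delicate point is that a common $\theta_K$, taken as an infimum possibly not attained, still works for every $q$. This holds because each entropy's bound is monotone in $\theta$ on the relevant branch, so any $\theta\le\theta_{K,q}$ suffices. One also needs $\theta_{K,q}\in[0,1]$ in the active branch, which follows from $\Psi_{K,q}(0)\le B_{K,q}^n<\Psi_{K,q}(1)$. No further estimates are needed beyond Lemma~\ref{lem:weak-multi-entropy}.
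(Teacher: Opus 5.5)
Your proposal is correct and follows the paper's proof: both reduce the claim to $\Psi_{K,q}(\theta_K)\le\Psi_{K,q}(0)+\theta_K[\Psi_{K,q}(1)-\Psi_{K,q}(0)]\le B_{K,q}^n$ via the chord bound, Lemma~\ref{lem:weak-multi-entropy}, and $\theta_K\le\theta_{K,q}$. The only difference is cosmetic: in the inactive branch you use a convex-combination argument, whereas the paper treats both branches at once by monotonicity in $\theta$, since the bracket is nonnegative by Jensen.
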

\begin{proof}
The cellwise scaling preserves the finite element space and the cell average.
For any $q\in\mathcal I$, \eqref{eq:explicit-theta} gives
$\Psi_{K,q}(0)+\theta_{K,q}[\Psi_{K,q}(1)-\Psi_{K,q}(0)]\leq B_{K,q}^n$
in both branches. Since $\theta_K\leq\theta_{K,q}$ and the bracket is
nonnegative, \eqref{eq:Psi-profile-bounds} implies
\begin{equation}\label{eq:common-entropy-budget}
\begin{aligned}
    \fint_K\eta_q(\bu_h^{n+1})\dd\bx
    &=\Psi_{K,q}(\theta_K) \leq\Psi_{K,q}(0)+\theta_K[\Psi_{K,q}(1)-\Psi_{K,q}(0)]
     \leq B_{K,q}^n.
\end{aligned}
\end{equation}
Substituting \eqref{eq:entropy-budget} proves
\eqref{eq:strong-cell-entropy} for every $q$, whether or not the infimum
in \eqref{eq:multi-entropy-theta} is attained.
\end{proof}

\begin{corollary}[Global multi-entropy stability]
\label{cor:global-entropy}
Under the hypotheses of Theorem~\ref{thm:strong-cell-entropy}, assume periodic boundary conditions or boundary conditions with nonnegative total outward numerical entropy flux for each $q$.
\begin{equation}\label{eq:global-entropy}
    \sum_{K\in\mathcal T_h}\int_K\eta_q(\bu_h^{n+1})\dd\bx
    \leq\sum_{K\in\mathcal T_h}\int_K\eta_q(\bu_h^n)\dd\bx,
    \qquad q\in\mathcal I.
\end{equation}
\end{corollary}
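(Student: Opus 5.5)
The approach is to multiply the strong cell entropy inequality \eqref{eq:strong-cell-entropy} of Theorem~\ref{thm:strong-cell-entropy} by $|K|$ and sum over all $K\in\mathcal T_h$. For each fixed $q\in\mathcal I$ this gives
\[
\sum_{K}\int_K\eta_q(\bu_h^{n+1})\dd\bx
\leq\sum_K\int_K\eta_q(\bu_h^n)\dd\bx
-\tau\sum_K\int_{\partial K}\wh{\Q}_{q,\bn_K}^{n,\mathrm{RK}}\dd s .
\]
It then suffices to show that the total outward numerical entropy flux $\sum_K\int_{\partial K}\wh{\Q}_{q,\bn_K}^{n,\mathrm{RK}}\dd s$ is nonnegative.

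The next step splits the face sum into interior faces and physical boundary faces. An interior face $e$ is shared by two cells $K$ and $K'$ with $\bn_{K'}=-\bn_K$. The interior trace seen from $K$ is the exterior trace seen from $K'$, and vice versa. All stages are evaluated from the same DG function, so the pair of traces is consistent between the two sides. For each stage $\ell$, the conservation property in \eqref{eq:normal-entropy-flux-conservation} gives
\[
\wh{\Q}_{q,-\bn_K}(\bu^{(\ell),+},\bu^{(\ell),-})=-\wh{\Q}_{q,\bn_K}(\bu^{(\ell),-},\bu^{(\ell),+})
\]
pointwise on $e$. This holds for every $q$ because each $\wh{\Q}_{q,\bn}$ has the Lax--Friedrichs form \eqref{eq:lf-entropy-flux} with the common $\alpha$. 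Multiplying by $b_\ell$ and summing shows that the RK-averaged contributions from $K$ and $K'$ cancel exactly. Hence only the physical boundary faces remain.

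For periodic boundary conditions there are no physical boundary faces: every face is paired with another, possibly through the periodic identification, so the total flux vanishes. Under the alternative hypothesis, the remaining boundary sum is precisely the total outward numerical entropy flux, which is nonnegative by assumption. In both cases we obtain \eqref{eq:global-entropy}. Since $q$ was arbitrary, the result holds for all $q\in\mathcal I$.

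The only point needing care is the bookkeeping of traces on interior faces. One must confirm that at each RK stage the ``$+$'' state used by $K$ is exactly the ``$-$'' state used by its neighbor, even though the local stages $\bu^{(\ell)}$ are computed cellwise. This holds because each stage $\bu_h^{n,(\ell)}$ is a single global function in $\mathbb V_h^k$ and its traces are defined from it. Beyond this, the argument is a telescoping sum.
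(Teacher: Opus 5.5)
Your proposal is correct and follows the paper's proof. You multiply the strong cell entropy inequality by $|K|$, sum over the mesh, cancel the interior-face contributions via the conservation property \eqref{eq:normal-entropy-flux-conservation}, and handle the remaining physical-boundary terms with the boundary hypothesis; your remark that each stage is a single global DG function, so that neighboring cells see consistent traces, is the bookkeeping the paper leaves implicit.
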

\begin{proof}
Multiply \eqref{eq:strong-cell-entropy} by $|K|$ and sum over the mesh.
Interior face terms cancel by \eqref{eq:normal-entropy-flux-conservation};
the total outward boundary flux is nonnegative under the stated conditions.
\end{proof}

Any coefficient in $[0,\theta_K]$ also satisfies all entropy budgets.
For an infinite family, implementing the construction requires
evaluating the infimum or obtaining a certified lower bound for it.
The result concerns the prescribed entropies and does not establish
an accuracy estimate when the limiter is active. 

\subsection{Lax--Wendroff convergence}
\label{sec:lax-wendroff}

The strong cell entropy inequalities above do more than provide a discrete
stability property: they also identify the entropy admissibility of any
strongly convergent numerical limit. We thus conclude this section
with a Lax--Wendroff-type convergence theorem. Its proof, deferred to
Appendix~\ref{app:lax-wendroff}, combines the conservative cell-average
update \eqref{eq:post-limiter-average}, the strong cell entropy inequalities
\eqref{eq:strong-cell-entropy}, and the local-stage estimate
\eqref{eq:all-stage-L2} for the fixed entropy family
\eqref{eq:entropy-family}.

Consider a sequence of meshes satisfying the assumptions of
Subsection~\ref{subsec:dg-space}, with
$h:=\max_{K\in\mathcal T_h}h_K\to0$, and assume additionally that the
mesh family is quasi-uniform. For each mesh, let $t^n=n\tau_h$, where
$\tau_h=T/N_h$, and define
\begin{equation}\label{eq:LW-reconstruction}
    \bu_h(\bx,t):=\bu_h^n(\bx),
    \qquad t\in[t^n,t^{n+1}),\quad n=0,\ldots,N_h-1.
\end{equation}

\begin{theorem}[Lax--Wendroff convergence with multiple entropies]
\label{thm:lax-wendroff}
Fix the polynomial degree, RK coefficients, parameter $\alpha$, state
set $\Bcal$, and entropy family indexed by $\mathcal I$, independently
of $h$. Suppose that the hypotheses of
Theorem~\ref{thm:strong-cell-entropy}, including
\eqref{eq:multi-entropy-CFL}, hold at every time step on the meshes
specified above. Assume
\begin{equation}\label{eq:LW-strong-convergence}
    \bu_h^0\longrightarrow\bu_0
        \quad\text{in }L^1(\Omega),
    \qquad
    \bu_h\longrightarrow\bu
        \quad\text{in }L^1(\Omega_T),
    \quad \Omega_T:=\Omega\times(0,T).
\end{equation}
Then $\bu$ takes values in $\Bcal$ almost everywhere and is a weak
solution of \eqref{eq:system} in $\Omega_T$ with initial data $\bu_0$:
for every scalar test function
$\varphi\in C_c^\infty(\Omega\times[0,T))$,
\begin{equation}\label{eq:LW-weak-solution}
\begin{aligned}
    &\int_0^T\int_\Omega
       \left[\bu\,\partial_t\varphi
          +\sum_{r=1}^d\bfv_r(\bu)\,\partial_{x_r}\varphi\right]
            \dd\bx\dd t
       +\int_\Omega\bu_0\varphi(\bx,0)\dd\bx=\mathbf0.
\end{aligned}
\end{equation}
Moreover, for every $q\in\mathcal I$ and every such test function
$\varphi\geq0$,
\begin{equation}\label{eq:LW-entropy-solution}
\begin{aligned}
    &\int_0^T\int_\Omega
       \left[\eta_q(\bu)\,\partial_t\varphi
          +\boldsymbol{\mathcal Q}_q(\bu)\cdot\nabla\varphi\right]
            \dd\bx\dd t
       +\int_\Omega\eta_q(\bu_0)\varphi(\bx,0)\dd\bx\geq0.
\end{aligned}
\end{equation}
\end{theorem}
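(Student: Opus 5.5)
The plan follows the classical Lax--Wendroff argument, carried out at the level of cell averages so that the limiter and the cell-local stages enter only through the already established identities. First, $\bu$ takes values in $\Bcal$: by Assumption~\ref{ass:state-range}, $\bu_h$ takes values in the closed convex set $\Bcal$, and a subsequence converges almost everywhere. For the weak formulation, fix $\varphi$ and let $\varphi_K^n:=\fint_K\varphi(\bx,t^n)\dd\bx$. Multiply the conservative update \eqref{eq:post-limiter-average} by $|K|\varphi_K^n$ and sum over $K$ and $n$. Summation by parts in time gives
\[
-\sum_n\tau\sum_K|K|\,\overline{\bu}_{h,K}^{\,n+1}\frac{\varphi_K^{n+1}-\varphi_K^n}{\tau}-\sum_K|K|\,\overline{\bu}_{h,K}^{\,0}\varphi_K^0 .
\]
Because $\varphi_K^n$ is the cell average of $\varphi$, the term $\overline{\bu}_{h,K}\varphi_K$ can be replaced by $\int_K\bu_h\varphi$ up to $O(h)$. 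This converges to $-\int\!\!\int\bu\,\partial_t\varphi-\int\bu_0\varphi(\cdot,0)$ by the $L^1$ convergence in \eqref{eq:LW-strong-convergence} and the boundedness of $\Bcal$.

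The flux term $\sum_n\tau\sum_K\varphi_K^n\int_{\partial K}\wh{\bfv}^{n,\mathrm{RK}}_{\bn_K}\dd s$ is rewritten using conservation of the numerical flux as a sum over faces, $\sum_e\int_e\wh{\bfv}^{n,\mathrm{RK}}(\varphi_{K}^n-\varphi_{K'}^n)$. We then compare it with the volume expression $-\sum_K\int_K\sum_r\bfv_r(\bu_h^n)\partial_{x_r}\varphi$. This is where the cell-local structure enters. I will write $\wh{\bfv}(\bu^{(\ell),-},\bu^{(\ell),+})=\bfv_{\bn}(\bu^{n,-})+E$. The error $E$ is bounded pointwise, as in Lemma~\ref{lem:squared-flux-deviation}, by $|\bu^{(\ell),\pm}-\bu^{n,-}|$. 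By Lemma~\ref{lem:all-stages} and trace estimates, this is controlled by $\norm{\bu^n-\overline{\bu}^n_K}_K$ on both sides together with the jump $|\overline{\bu}_K^n-\overline{\bu}_{K'}^n|$. Since $|\varphi_K^n-\varphi_{K'}^n|\lesssim h\norm{\nabla\varphi}_\infty$, the total error is bounded by
\[
C\sum_n\tau\sum_K h^{d}\fint_K|\bu_h^n-\overline{\bu}^n_K|\dd\bx+C\sum_n\tau\sum_{e}h^{d}|\overline{\bu}_K^n-\overline{\bu}_{K'}^n|,
\]
summed over cells and faces meeting $\operatorname{supp}\varphi$. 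The remaining main term, $\sum_K\varphi^n_K\int_{\partial K}\bfv_{\bn_K}(\bu^{n,-})$, is handled by the divergence theorem on each cell, which gives $\int_K\nabla\cdot\bfv(\bu_h^n)\varphi_K^n$, and then a further replacement $\varphi_K^n\to\varphi$ costing $h\norm{\bu_h^n}_{\text{BV}(K)}$-type terms. I will avoid BV by instead integrating by parts back onto $\varphi$ and using a local inverse estimate to bound $\int_K|\nabla\bu_h^n|h$ by $\int_K|\bu_h^n-\overline{\bu}_K^n|$.

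\textbf{Main obstacle.} The key step is showing that these error sums vanish as $h\to0$ using only $L^1$ convergence, without any BV bound. The approach is the standard compactness trick. Strong $L^1$ convergence implies $L^1$-equicontinuity under translations, by the Kolmogorov--Riesz theorem applied to the convergent sequence. Quasi-uniformity then gives $\sum_K\int_K|\bu_h^n-\overline{\bu}^n_K|\to0$, since this quantity is bounded by averaged translation differences of size $h$; likewise, neighbor differences of averages are bounded by translations of size $O(h)$. Both error sums therefore tend to zero, because uniformly bounded values on $\Bcal$ allow passage between $L^1$ and $L^2$ norms of deviations. The piecewise-constant-in-time reconstruction and the time sum are harmless, since integrating in time turns the spatial errors into $\norm{\bu_h(\cdot+\mathbf y,\cdot)-\bu_h}_{L^1(\Omega_T)}$.

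The entropy inequality follows the same template with $\varphi\geq0$. We start from the strong cell entropy inequality \eqref{eq:strong-cell-entropy} instead of an equality, and multiply by $|K|\varphi_K^n\geq0$ to preserve the inequality direction. We use the conservative numerical entropy flux \eqref{eq:normal-entropy-flux-conservation}, and bound $\wh{\Q}$ minus the interior physical entropy flux as in the proof of Lemma~\ref{lem:squared-flux-deviation}, using the $C^2$ regularity on $\Bcal$. The constants depend on $q$, but each $q$ is treated separately, so uniformity over $\mathcal I$ is not needed. Finally, $\eta_q(\bu_h)\to\eta_q(\bu)$ and $\Q_{q,r}(\bu_h)\to\Q_{q,r}(\bu)$ in $L^1$ by Lipschitz continuity on $\Bcal$.
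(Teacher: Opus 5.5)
\textbf{There is a genuine gap in the flux-consistency step: your decomposition does not add up.} When you substitute $\wh{\bfv}=\bfv_{\bn_K}(\bu_K^{n,-})+E$ into the face-regrouped sum, what remains is $\sum_e\int_e(\varphi_K^n-\varphi_{K'}^n)\,\bfv_{\bn_K}(\bu_K^{n,-})\dd s$, with the interior trace taken from one side of each face only. This is not the cellwise term $\sum_K\varphi_K^n\int_{\partial K}\bfv_{\bn_K}(\bu_K^{n,-})\dd s$ that you go on to treat. The two differ by $\sum_e\varphi_{K'}^n\int_e[\bfv_{\bn_K}(\bu_K^{n,-})-\bfv_{\bn_K}(\bu_{K'}^{n,-})]\dd s$, which carries no factor of $h$.

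The cellwise term itself equals $\sum_K\varphi_K^n\int_K\sum_r\partial_{x_r}\bfv_r(\bu_h^n)\dd\bx$. It vanishes identically whenever $\bu_h^n$ is piecewise constant. So it does not in general converge to $-\int_0^T\!\int_\Omega\sum_r\bfv_r(\bu)\,\partial_{x_r}\varphi\dd\bx\dd t$; think of piecewise-constant approximations of a moving shock. Your next step, replacing $\varphi_K^n$ by $\varphi$ and integrating by parts back onto $\varphi$, produces the face terms $\sum_K\int_{\partial K}\varphi\,\bfv_{\bn_K}(\bu_K^{n,-})\dd s$. These do not cancel between neighbors, are $O(1)$ at discontinuities, and carry exactly the Rankine--Hugoniot information the argument must capture. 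Comparing the cell-form flux sum with $\bfv_{\bn_K}(\bu_K^{n,-})$ directly does not help either: the error $\sum_K\varphi_K^n\int_{\partial K}E\dd s$ then has no $O(h)$ weight, and your estimates do not make it small.

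\textbf{The repair keeps all your ingredients.} By conservation of the Lax--Friedrichs flux, continuity of $\varphi$, and its support, $\sum_K\int_{\partial K}\varphi(\cdot,t^n)\,\wh{\bfv}^{n,\mathrm{RK}}_{\bn_K}\dd s=0$. Hence the flux sum equals $\sum_K\int_{\partial K}(\varphi_K^n-\varphi)\,\wh{\bfv}^{n,\mathrm{RK}}_{\bn_K}\dd s$, whose weight is $O(h)$ on every face. Replacing $\wh{\bfv}$ by $\bfv_{\bn_K}(\bu_K^{n,-})$ now costs exactly the errors you estimate. The divergence theorem turns the main term into $\sum_K\int_K(\varphi_K^n-\varphi)\sum_r\partial_{x_r}\bfv_r(\bu_h^n)\dd\bx-\int_\Omega\sum_r\bfv_r(\bu_h^n)\,\partial_{x_r}\varphi\dd\bx$, and your inverse estimate disposes of the first part. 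The same correction applies verbatim to the entropy flux.

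The paper instead compares with a smooth $\Bcal$-valued $\bv$. Its normal flux is single-valued on faces, so the face-regrouped sum with $\bfv_{\bn_K}(\bv)$ equals $\sum_K\overline\varphi_K\int_K\sum_r\partial_{x_r}\bfv_r(\bv)\dd\bx$ exactly. A weighted face estimate bounds the discrepancy by $C\norm{\bu-\bv}_{L^2(\Omega_T)}$, and density concludes. This also avoids the neighbor jumps $|\overline{\bu}_K-\overline{\bu}_{K'}|$ that your route needs.

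The rest of your plan is sound. Your translation-equicontinuity argument is a valid alternative to the paper's proof that $\norm{\bu_h-\overline{\bu}_h}_{L^2(\Omega_T)}\to0$, which uses $L^2$-contractivity of cell averaging and $\overline{\bu}\to\bu$ for the fixed limit. It also controls the neighbor jumps. One small correction: the trace inverse estimate produces $L^2$ cell deviations, not $\fint_K|\cdot|$. As you note, boundedness of $\Bcal$ makes this harmless, at the cost of a square root.
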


The proof first establishes convergence of the local stages.
A weighted face estimate then gives consistency of the RK-averaged
numerical physical and entropy fluxes. Finally, we pass to the
limit in the conservative update and the strong cell entropy
inequalities; see Appendix~\ref{app:lax-wendroff}.

\section{Numerical experiments}
\label{sec:numerical-experiments}

We use the following tests to assess high-order accuracy for smooth
solutions, simultaneous control of several entropy inequalities,
and robustness for shock problems. We also compare end-of-step
and stagewise entropy corrections in a smooth Burgers test. We denote the cRKDG scheme without the entropy correction by
Base and the fully discrete scheme of Section~\ref{sec:entropy-limiter}
by ES.

\paragraph{Discretization and quadrature}
We use uniform meshes and a Legendre basis for
$\mathbb Z^k=\mathbb P^k$. For cRKDG$p$ with $p=k+1$, we use
explicit midpoint for $p=2$, Heun's third-order method for $p=3$,
and the classical fourth-order RK method for $p=4$ \cite{ChenSunXing2024}. The smooth convergence tests use $p=2,3,4$; the remaining
tests use cRKDG4. The time step is $\tau=c \Delta x/\alpha$, with $c=0.01$ in
the accuracy tests, $c=0.08$ in the other 1D tests, and $c=0.04$ in the other 2D tests.
For linear advection, Burgers' equation, the 1D Buckley--Leverett
equation, and the 2D Buckley--Leverett equation, we use $\alpha=1,2,3,3.476$, respectively. For systems,
$\alpha^n=1.05\,\lambda(\bu_h^n)$ bounds the characteristic speeds at
quadrature points and in the boundary data and is fixed during each
attempted time step; if a stage produces a larger speed, the step is
restarted with the enlarged $\alpha$. In 1D, cell integrals are evaluated
with the $(2k+1)$-point Gauss--Legendre rule. In 2D, cell and face
integrals use the $(k+1)\times(k+1)$ tensor-product and $(k+1)$-point
Gauss--Legendre rules, respectively. These positive-weight rules satisfy
the degree-$2k$ exactness requirement of
Remark~\ref{rem:quadrature-entropy}.

\paragraph{Entropy correction, admissibility, and diagnostics}
ES applies \eqref{eq:explicit-theta}--\eqref{eq:post-limiter-solution}
once per completed step, with the round-off safeguard of
Remark~\ref{rem:roundoff}. Each attempted
step is checked a posteriori: it is recomputed with $\tau/2$ if the weak budget \eqref{eq:weak-budget-form} or the strong
inequality \eqref{eq:strong-cell-entropy} fails by more than
$100\epsilon S_{K,q}$ in any cell, or if an inadmissible state occurs.
Thus the experiments verify entropy enforcement under practical time
steps; they are not intended to measure the sharpness of the sufficient CFL bound. For the blast-wave and double Mach tests, the
bound-preserving limiter of \cite{LiuSunZhang2025} scales the stages and
$\bu_h^{n+1,\star}$ so that density and pressure are at least $10^{-13}$
at all quadrature and check points; for the 2D Buckley--Leverett test it
enforces $u\in[0,1]$. This stage scaling modifies the uncorrected cRKDG
stage recursion used in Theorem~\ref{thm:weak-entropy}; accordingly, in
these bound-preserving tests we rely on the a posteriori weak-budget
check above rather than claiming that Theorem \ref{thm:weak-entropy} applies unchanged. The subsequent entropy scaling is a convex combination with
the cell average and therefore preserves the imposed bounds. No
additional shock-capturing limiter is used. Solution plots show cell
averages. Whenever a reference solution is shown, it is computed on a
substantially finer mesh with the WENO method. 

\subsection{One-dimensional scalar problems}

\begin{example}[Linear advection]\label{ex:linear-advection}
We solve $u_t+u_x=0$ on $[0,2\pi]$ with periodic boundary conditions,
$u(x,0)=\sin^4x$, and $T=1$. ES enforces
$(\eta_1,\Q_1)=(e^u,e^u)$ and $(\eta_2,\Q_2)=(u^2/2,u^2/2)$
simultaneously. Table~\ref{tab:num1d-linear} shows the designed orders
$p=2,3,4$ for both Base and ES, with comparable errors.
Figure~\ref{fig:num1d-linear} shows that the active cells form narrow
bands transported with the smooth solution.
\end{example}

\begin{table}[!htbp]
\centering
\caption{Example~\ref{ex:linear-advection}, $T=1$:
$L^2$ errors. ES:
$\eta_1=e^u$ and $\eta_2=u^2/2$.}
\label{tab:num1d-linear}
\normalsize
\begin{tabular}{c r cc cc}
\toprule
& & \multicolumn{2}{c}{Base} & \multicolumn{2}{c}{ES}\\
\cmidrule(lr){3-4}\cmidrule(lr){5-6}
$p$ & $N$ & Error & Order & Error & Order\\
\midrule
\multirow{5}{*}{$2$}
& 20  & $3.380\times10^{-2}$ & --   & $3.399\times10^{-2}$ & --   \\
& 40  & $7.761\times10^{-3}$ & 2.12 & $7.785\times10^{-3}$ & 2.13 \\
& 80  & $1.875\times10^{-3}$ & 2.05 & $1.879\times10^{-3}$ & 2.05 \\
& 160 & $4.640\times10^{-4}$ & 2.01 & $4.649\times10^{-4}$ & 2.01 \\
\midrule
\multirow{5}{*}{$3$}
& 20  & $2.358\times10^{-3}$ & --   & $2.358\times10^{-3}$ & --   \\
& 40  & $2.924\times10^{-4}$ & 3.01 & $2.923\times10^{-4}$ & 3.01 \\
& 80  & $3.651\times10^{-5}$ & 3.00 & $3.649\times10^{-5}$ & 3.00 \\
& 160 & $4.563\times10^{-6}$ & 3.00 & $4.559\times10^{-6}$ & 3.00 \\
\midrule
\multirow{5}{*}{$4$}
& 20  & $1.632\times10^{-4}$ & --   & $1.633\times10^{-4}$ & --   \\
& 40  & $1.042\times10^{-5}$ & 3.97 & $1.048\times10^{-5}$ & 3.96 \\
& 80  & $6.446\times10^{-7}$ & 4.01 & $6.536\times10^{-7}$ & 4.00 \\
& 160 & $4.035\times10^{-8}$ & 4.00 & $4.091\times10^{-8}$ & 4.00 \\
\bottomrule
\end{tabular}
\end{table}

\begin{figure}[h!]\centering
\begin{subfigure}[t]{.325\linewidth}\centering
\includegraphics[width=\linewidth]{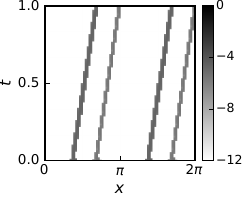}\caption{$p=2$}\label{fig:num1d-linear-p2}
\end{subfigure}\hfill
\begin{subfigure}[t]{.325\linewidth}\centering
\includegraphics[width=\linewidth]{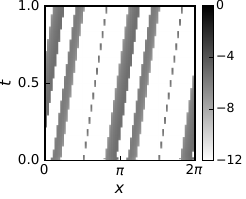}\caption{$p=3$}\label{fig:num1d-linear-p3}
\end{subfigure}\hfill
\begin{subfigure}[t]{.325\linewidth}\centering
\includegraphics[width=\linewidth]{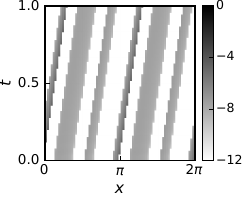}\caption{$p=4$}\label{fig:num1d-linear-p4}
\end{subfigure}
\caption{Example~\ref{ex:linear-advection}, ES with $\eta_1=e^u$ and $\eta_2=u^2/2$: limiter-activation
history. $\log_{10}(1-\theta_K)$ is plotted.}
\label{fig:num1d-linear}
\end{figure}

\begin{example}[Burgers' equation]\label{ex:burgers}
We solve $u_t+(u^2/2)_x=0$ on $[0,2\pi]$ with periodic boundary conditions,
$u(x,0)=0.5+\sin x$, and $(\eta,\Q)=(e^u,(u-1)e^u)$; the exact solution is
smooth until $t=1$. At $T=0.3$, we compare cRKDG4 with SSPRK(10,4)--DG,
i.e., SSPRK(10,4) \cite{Ketcheson2008} with $\DG$ at every stage. Its ES
variant performs a correction after each of the ten forward-Euler
substeps. Table~\ref{tab:num1d-burgers} shows fourth-order accuracy for cRKDG4 with a single
end-of-step correction, whereas the tested SSPRK(10,4)--DG variant
with stagewise correction exhibits first-order accuracy.
\end{example}

\begin{table}[!htbp]
\centering
\caption{Example~\ref{ex:burgers}, $T=0.3$:
$L^2$ errors. For cRKDG4, ES is applied once to
$u_h^{n+1,\star}$; for SSPRK(10,4)--DG, ES is applied after every
forward-Euler substep.}
\label{tab:num1d-burgers}
\normalsize
\begin{tabular}{l r cc cc}
\toprule
& & \multicolumn{2}{c}{Base} & \multicolumn{2}{c}{ES}\\
\cmidrule(lr){3-4}\cmidrule(lr){5-6}
Method & $N$ & Error & Order & Error & Order\\
\midrule
\multirow{5}{*}{cRKDG4}
& 20  & $1.761\times10^{-5}$  & --   & $1.762\times10^{-5}$  & --   \\
& 40  & $1.066\times10^{-6}$  & 4.05 & $1.066\times10^{-6}$  & 4.05 \\
& 80  & $6.600\times10^{-8}$  & 4.01 & $6.602\times10^{-8}$  & 4.01 \\
& 160 & $4.125\times10^{-9}$  & 4.00 & $4.125\times10^{-9}$  & 4.00 \\
\midrule
\multirow{5}{*}{SSPRK(10,4)--DG}
& 20  & $1.771\times10^{-5}$  & --   & $2.771\times10^{-4}$ & --   \\
& 40  & $1.074\times10^{-6}$  & 4.04 & $1.333\times10^{-4}$ & 1.06 \\
& 80  & $6.658\times10^{-8}$  & 4.01 & $6.450\times10^{-5}$ & 1.05 \\
& 160 & $4.163\times10^{-9}$  & 4.00 & $3.166\times10^{-5}$ & 1.03 \\
\bottomrule
\end{tabular}
\end{table}

We next compute to $T=1.08$, after shock formation, with $N=80$. Base exhibits global entropy increases at some time steps, whereas ES maintains nonincreasing global entropy up to the prescribed tolerance; see Figure~\ref{fig:num1d-burgers-entropy}. In the final ES step, the entropy residuals for the cell averages, the pre-limiter solution, and the post-limiter solution are given in Figure~\ref{fig:num1d-burgers-residuals}.

\begin{figure}[h!]\centering
\begin{subfigure}[t]{.4\linewidth}\centering
\includegraphics[width=\linewidth]{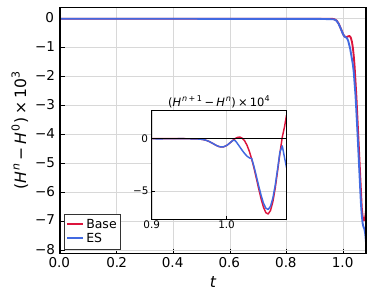}
\caption{total entropy}
\label{fig:num1d-burgers-entropy}
\end{subfigure}\qquad\qquad 
\begin{subfigure}[t]{.4\linewidth}\centering
\includegraphics[width=\linewidth]{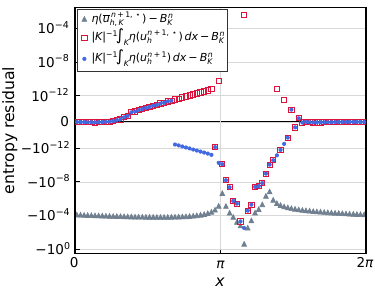}
\caption{residuals in the final ES step}
\label{fig:num1d-burgers-residuals}
\end{subfigure}
\caption{Example~\ref{ex:burgers}, $p=4$, $N=80$, $T=1.08$. Figure~\ref{fig:num1d-burgers-entropy} shows $H^n-H^0$, where $H^n:=\int_\Omega\eta(\bu_h^n)\dd\bx$ is evaluated with the $(2k+1)$-point Gauss--Legendre quadrature rule; the inset shows $H^{n+1}-H^n$ for $t\geq0.9$. Figure~\ref{fig:num1d-burgers-residuals} shows the weak, pre-correction, and post-correction residuals in the final ES step (symmetric logarithmic scale, linear for $|\cdot|<10^{-14}$).}
\label{fig:num1d-burgers}
\end{figure}

\begin{example}[Buckley--Leverett equation]\label{ex:buckley-leverett-1d}
We solve the Buckley--Leverett equation with flux
$f(u)=4u^2/(4u^2+(1-u)^2)$ on $[-0.5,0.5]$.
The initial data have a jump at $x=0$, and the boundary states
are fixed. The Riemann data are $(u_L,u_R)=(-3,3)$ (IC1) and
$(2,-2)$ (IC2); $N=80$ and $T=1$, and no bound-preserving limiter is
used. We enforce one of
$\eta_1(u)=u^2/2$, $\eta_2(u)=\Phi(u)$, and $\eta_3(u)=\Phi(u-1)$, or the
pair $\{\eta_2,\eta_3\}$, where
\[
\Phi(z)=z\arctan(20z)-\frac1{40}\log(1+400z^2),
\qquad \Phi'(z)=\arctan(20z),
\]
and $\Q_q'=\eta_q'f'$. Figure~\ref{fig:num1d-buckley} shows that ES-$\eta_2$ selects the entropy solution for IC1 and ES-$\eta_3$ for IC2, while ES-$\eta_1$ does not select the correct solution for either data set. Enforcing both $\eta_2$ and $\eta_3$ gives the correct solution in both cases.
\end{example}

\begin{figure}[h!]\centering
\begin{subfigure}[t]{.4\linewidth}\centering
\includegraphics[width=\linewidth]{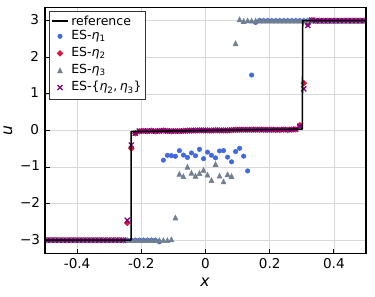}
\caption{IC1: $(u_L,u_R)=(-3,3)$}
\label{fig:num1d-buckley-ic1}
\end{subfigure}\qquad\qquad
\begin{subfigure}[t]{.4\linewidth}\centering
\includegraphics[width=\linewidth]{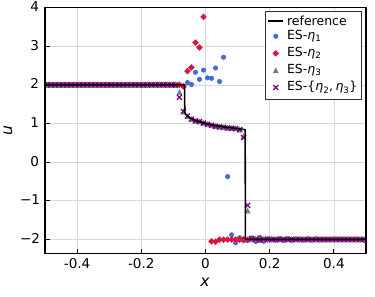}
\caption{IC2: $(u_L,u_R)=(2,-2)$}
\label{fig:num1d-buckley-ic2}
\end{subfigure}
\caption{Example~\ref{ex:buckley-leverett-1d}, $p=4$, $N=80$, $T=1$. In Figure~\ref{fig:num1d-buckley-ic2}, one cell of ES-$\eta_2$, with average $17.64$ at $x=0.0063$, lies above the plotted range.}
\label{fig:num1d-buckley}
\end{figure}

\subsection{One-dimensional Euler equations}
For the one-dimensional Euler equations,
$\bu=(\rho,\rho v,E)^T$ and
$\bfv(\bu)=(\rho v,\rho v^2+P,v(E+P))^T$. We use $\gamma=1.4$ and
$P=(\gamma-1)(E-\rho v^2/2)$, with density $\rho$, velocity $v$, and
total energy $E$. The entropy pair is
$(\eta,\Q)=(-\rho s/(\gamma-1),v\eta)$ with
$s=\log(P\rho^{-\gamma})$. 
Figure~\ref{fig:num1d-euler} collects the density profiles and the corresponding
limiter-activation histories for the next two tests.

\begin{figure}[h!]\centering
\begin{subfigure}[t]{.4\linewidth}\centering
\includegraphics[width=\linewidth]{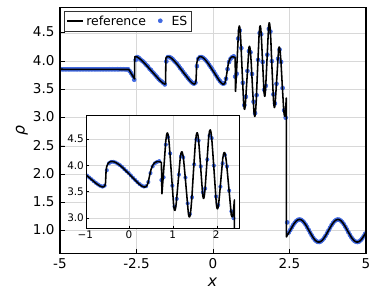}
\caption{Example~\ref{ex:shu-osher}: density}
\label{fig:num1d-euler-shu-density}
\end{subfigure}\qquad\qquad
\begin{subfigure}[t]{.4\linewidth}\centering
\includegraphics[width=\linewidth]{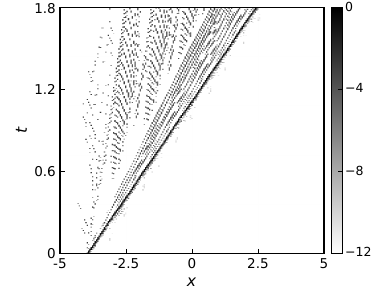}
\caption{Example~\ref{ex:shu-osher}: $\log_{10}(1-\theta_K)$}
\label{fig:num1d-euler-shu-activity}
\end{subfigure}
\qquad
\vspace{2pt}
\begin{subfigure}[t]{.4\linewidth}\centering
\includegraphics[width=\linewidth]{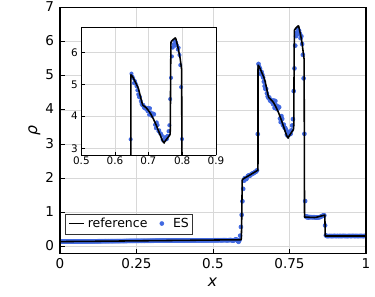}
\caption{Example~\ref{ex:blast-waves}: density}
\label{fig:num1d-euler-blast-density}
\end{subfigure}\qquad\qquad
\begin{subfigure}[t]{.4\linewidth}\centering
\includegraphics[width=\linewidth]{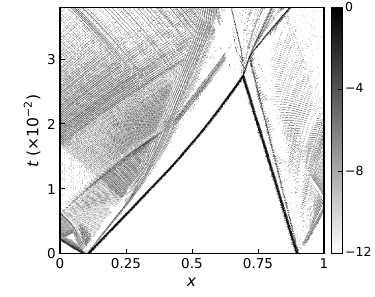}
\caption{Example~\ref{ex:blast-waves}: $\log_{10}(1-\theta_K)$}
\label{fig:num1d-euler-blast-activity}
\end{subfigure}
\caption{Examples~\ref{ex:shu-osher} and~\ref{ex:blast-waves}, $p=4$. Figures~\ref{fig:num1d-euler-shu-density} and~\ref{fig:num1d-euler-shu-activity} show the Shu--Osher problem with $N=200$ and $T=1.8$; Figures~\ref{fig:num1d-euler-blast-density} and~\ref{fig:num1d-euler-blast-activity} show the interacting blast waves with $N=400$ and $T=0.038$. The activity panels show the limiter-activation histories of the entropy correction.}
\label{fig:num1d-euler}
\end{figure}

\begin{example}[Shu--Osher problem]\label{ex:shu-osher}
On $[-5,5]$,
$(\rho,v,P)=(3.857143,\allowbreak 2.629369,\allowbreak 10.3333)$ for $x<-4$ and
$(1+0.2\sin(5x),0,1)$ otherwise \cite{ShuOsher1989}; $N=200$, $T=1.8$,
and no positivity limiter is used. Figure~\ref{fig:num1d-euler-shu-density} shows that the high-frequency waves
behind the shock are resolved. The limiter-activation history in
Figure~\ref{fig:num1d-euler-shu-activity} shows that the entropy correction is
concentrated along the shock path and in the post-shock waves.
\end{example}

\begin{example}[Interacting blast waves]\label{ex:blast-waves}
On $[0,1]$ with reflecting boundaries, $\rho=1$, $v=0$, and
$P=1000$, $0.01$, and $100$ for $x<0.1$, $0.1\leq x\leq0.9$, and
$x>0.9$ \cite{WoodwardColella1984}.
We use $N=400$ cells and compute to $T=0.038$ with the positivity limiter. The narrow density peaks agree well with the reference
in Figure~\ref{fig:num1d-euler-blast-density}. The activation history in
Figure~\ref{fig:num1d-euler-blast-activity} follows the interacting wave pattern.
\end{example}

\subsection{Two-dimensional problems}

\begin{example}[Isentropic vortex]\label{ex:isentropic-vortex} For the 2D Euler equations, 
$\bu=(\rho,\rho v_1,\rho v_2,E)^T$ and
$P=(\gamma-1)\bigl(E-\rho(v_1^2+v_2^2)/2\bigr)$.
We solve the Euler equations with $\gamma=1.4$ on $[0,10]\times[-5,5]$
with periodic boundaries.
A vortex of strength $\beta=5$ centered at $(5,0)$ is added to the mean
flow $(\rho,v_1,v_2,P)=(1,1,0,1)$:
\[
\rho=\Big(1-\frac{(\gamma-1)\beta^2}{16\gamma\pi^2}e^{2(1-r^2)}\Big)^{\frac1{\gamma-1}},\quad
(v_1,v_2)=(1,0)+\frac{\beta}{2\pi}e^{1-r^2}(-y,\,x-5),\quad P=\rho^\gamma,
\]
where $r^2=(x-5)^2+y^2$. We compute to $T=10$, when the solution coincides with the initial data. We use
$\eta=-\rho s/(\gamma-1)$ and
$\boldsymbol{\Q}=(v_1\eta,v_2\eta)$. Table~\ref{tab:num2d-vortex}
shows the designed convergence behavior.
\end{example}

\begin{table}[!htbp]
\centering
\caption{Example~\ref{ex:isentropic-vortex}, $T=10$:
$L^2$ errors of density on
$N\times N$ meshes.}
\label{tab:num2d-vortex}
\normalsize
\begin{tabular}{c r cc cc}
\toprule
& & \multicolumn{2}{c}{Base} & \multicolumn{2}{c}{ES}\\
\cmidrule(lr){3-4}\cmidrule(lr){5-6}
$p$ & $N$ & Error & Order & Error & Order\\
\midrule
\multirow{4}{*}{$2$}
& 20  & $3.176\times10^{-2}$ & --   & $3.181\times10^{-2}$ & --   \\
& 40  & $8.384\times10^{-3}$ & 1.92 & $8.560\times10^{-3}$ & 1.89 \\
& 80  & $1.857\times10^{-3}$ & 2.17 & $1.841\times10^{-3}$ & 2.22 \\
& 160 & $2.770\times10^{-4}$ & 2.74 & $2.751\times10^{-4}$ & 2.74 \\
\midrule
\multirow{4}{*}{$3$}
& 20  & $4.576\times10^{-3}$ & --   & $3.981\times10^{-3}$ & --   \\
& 40  & $2.673\times10^{-4}$ & 4.10 & $2.614\times10^{-4}$ & 3.93 \\
& 80  & $2.813\times10^{-5}$ & 3.25 & $2.802\times10^{-5}$ & 3.22 \\
& 160 & $3.876\times10^{-6}$ & 2.86 & $3.875\times10^{-6}$ & 2.85 \\
\midrule
\multirow{4}{*}{$4$}
& 20  & $5.609\times10^{-4}$ & --   & $5.299\times10^{-4}$ & --   \\
& 40  & $1.934\times10^{-5}$ & 4.86 & $1.907\times10^{-5}$ & 4.80 \\
& 80  & $7.706\times10^{-7}$ & 4.65 & $7.705\times10^{-7}$ & 4.63 \\
& 160 & $3.777\times10^{-8}$ & 4.35 & $3.777\times10^{-8}$ & 4.35 \\
\bottomrule
\end{tabular}
\end{table}

\begin{example}[2D Buckley--Leverett equation]\label{ex:buckley-leverett-2d}
We solve
$u_t+f(u)_x+g(u)_y=0$ with
$f(u)=u^2/(u^2+(1-u)^2)$ and $g(u)=f(u)(1-5(1-u)^2)$ on
$[-1.5,1.5]^2$ with periodic boundaries. The initial state is $u=1$ for
$x^2+y^2<0.5$ and $u=0$ otherwise.
We compute to $T=0.5$ on a $200\times200$ mesh, using the
bound-preserving limiter to enforce $u\in[0,1]$. We enforce either
$\eta_1(u)=u^2/2$ or $\eta_2(u)=\Phi(u-1/2)$, with $\Phi$ from Example~\ref{ex:buckley-leverett-1d}. As shown in Figure~\ref{fig:num2d-bl-eta1}, ES-$\eta_1$ retains some minor nonphysical structures, whereas Figure~\ref{fig:num2d-bl-eta2} shows a more satisfactory result for ES-$\eta_2$.
\end{example}

\begin{figure}[h!]\centering
\begin{subfigure}[t]{.45\linewidth}\centering
\includegraphics[width=\linewidth]{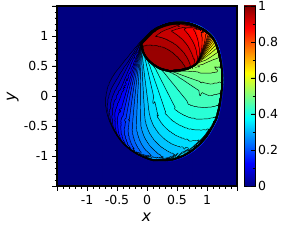}\caption{ES-$\eta_1$}\label{fig:num2d-bl-eta1}
\end{subfigure}\ 
\begin{subfigure}[t]{.45\linewidth}\centering
\includegraphics[width=\linewidth]{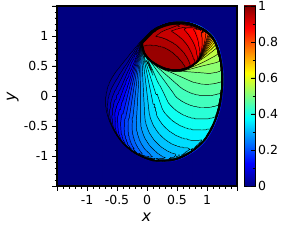}\caption{ES-$\eta_2$}\label{fig:num2d-bl-eta2}
\end{subfigure}
\caption{Example~\ref{ex:buckley-leverett-2d}, $p=4$, $200\times200$, $T=0.5$: cell averages with
$40$ equally spaced contour lines in $(0,1)$. }
\label{fig:num2d-bl}
\end{figure}

\begin{example}[Double Mach reflection]\label{ex:double-mach}
We use the standard Mach-10 double Mach reflection problem
\cite{WoodwardColella1984} on $[0,4]\times[0,1]$. The
incident shock initially satisfies $x=1/6+y/\sqrt3$, with post-shock
state $(\rho,v_1,v_2,P)=(8,8.25\cos30^\circ,-8.25\sin30^\circ,116.5)$
and pre-shock state $(1.4,0,0,1)$. The left boundary is fixed at the
post-shock state, the right boundary at the pre-shock state, and the top
boundary follows the exact incident-shock motion. On $y=0$ we impose the
post-shock state for $x<1/6$ and a reflecting wall for $x\geq1/6$. We use
the entropy pair from Example~\ref{ex:isentropic-vortex} and the positivity
limiter, and compute to $T=0.2$ on meshes of $480\times120$ and
$960\times240$ cells. Figures~\ref{fig:num2d-dm-480-full} and~\ref{fig:num2d-dm-960-full} show the global density profiles, while Figures~\ref{fig:num2d-dm-480-zoom} and~\ref{fig:num2d-dm-960-zoom} show the corresponding Mach-stem regions. The finer mesh more clearly resolves the roll-up of the slip line near the Mach stem, while the main shock geometry is already captured on the coarser mesh.
Oscillations remain observable because no additional
troubled-cell or shock-capturing limiter is used.
\end{example}

\begin{figure}[h!]\centering
\begin{subfigure}[t]{.67\linewidth}\centering
\includegraphics[width=\linewidth,trim=8 0 8 0,clip]{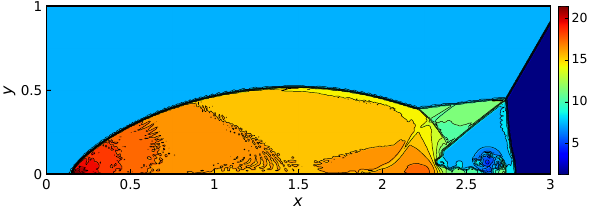}
\caption{$480\times120$}
\label{fig:num2d-dm-480-full}
\end{subfigure}\hspace{2pt}%
\begin{subfigure}[t]{.315\linewidth}\centering
\includegraphics[width=\linewidth,trim=8 0 8 0,clip]{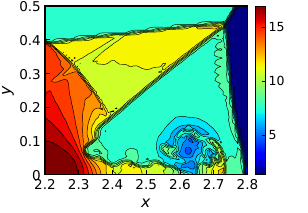}
\caption{$480\times120$, zoom}
\label{fig:num2d-dm-480-zoom}
\end{subfigure}

\vspace{4pt}

\begin{subfigure}[t]{.67\linewidth}\centering
\includegraphics[width=\linewidth,trim=8 0 8 0,clip]{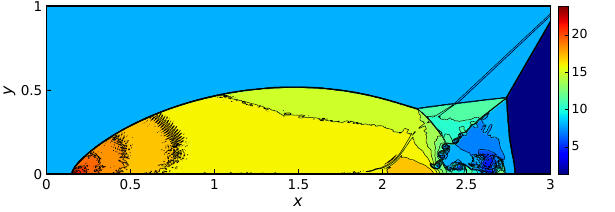}
\caption{$960\times240$}
\label{fig:num2d-dm-960-full}
\end{subfigure}\hspace{2pt}%
\begin{subfigure}[t]{.315\linewidth}\centering
\includegraphics[width=\linewidth,trim=8 0 8 0,clip]{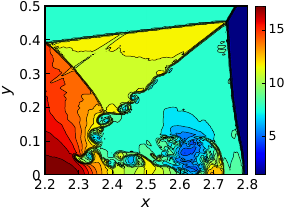}
\caption{$960\times240$, zoom}
\label{fig:num2d-dm-960-zoom}
\end{subfigure}

\caption{Example~\ref{ex:double-mach}, $p=4$, $T=0.2$: density at both resolutions. Figures~\ref{fig:num2d-dm-480-full} and~\ref{fig:num2d-dm-960-full} show the solutions on $[0,3]\times[0,1]$, while Figures~\ref{fig:num2d-dm-480-zoom} and~\ref{fig:num2d-dm-960-zoom} show the corresponding Mach-stem window $[2.2,2.8]\times[0,0.5]$. Each panel uses $20$ equally spaced contour lines between the smallest and largest cell averages in the corresponding window.}
\label{fig:num2d-dm}
\end{figure}

\section{Conclusions}
\label{sec:conclusion}

In this paper, we develop fully discrete multi-entropy-stable
cRKDG methods with high-order explicit RK time discretizations.
We establish a weak cell entropy inequality after one full RK
time step by controlling the cell-local stage perturbations
through the initial in-cell variation and estimating the
numerical-flux contributions. Unlike standard SSP-based arguments, our proof does not rely on
a convex decomposition into forward-Euler steps. This allows us
to apply the weak-to-strong entropy framework to cRKDG schemes
without an SSP requirement. Based on
this weak entropy property, a single cell-average-preserving scaling is
applied at the end of each time step to enforce a prescribed family of
strong cell entropy inequalities simultaneously, while preserving the
conservative cell averages and compact stencil of the cRKDG method. We also prove a Lax--Wendroff-type convergence theorem, showing that a
strongly convergent sequence of numerical solutions converges to a weak
solution satisfying the prescribed entropy inequalities. Numerical
tests confirm the designed high-order accuracy, demonstrate the
benefit of multiple entropy conditions for nonconvex scalar problems,
and show robust performance for  shock problems.

\appendix
\section{Proof of the Lax--Wendroff convergence theorem}\label{app:lax-wendroff}
\begin{proof}
The common CFL condition implies $\tau_h\to0$. All constants below
are independent of $h$ and $\tau_h$; a subscript $q$ indicates possible
dependence on the fixed entropy pair.
\paragraph{Step 1: convergence of the local stages}
Assumption~\ref{ass:state-range} and convexity of $\Bcal$ show that the
post-limiter solutions take values in $\Bcal$; the same is true of
$\bu$ and $\bu_0$ almost everywhere by strong convergence. Set
$M:=\max_{\bw\in\Bcal}|\bw|$. Since $|\bu_h-\bu|\leq2M$,
\begin{equation}\label{eq:LW-L2-convergence}
    \norm{\bu_h-\bu}_{L^2(\Omega_T)}^2
       \leq2M\norm{\bu_h-\bu}_{L^1(\Omega_T)}\longrightarrow0.
\end{equation}
The same argument gives $\bu_h^0\to\bu_0$ in $L^2(\Omega)$.

Cell averaging is an $L^2$ contraction, and for the fixed function
$\bu$ its cell averages satisfy $\overline{\bu}\to\bu$ in
$L^2(\Omega_T)$. Hence
\begin{equation}\label{eq:LW-vanishing-variation}
    \norm{\bu_h-\overline{\bu}_h}_{L^2(\Omega_T)}
    \leq\norm{\bu_h-\bu}_{L^2(\Omega_T)} + \norm{\overline{\bu}_h-\overline{\bu}}_{L^2(\Omega_T)}
       +\norm{\bu-\overline{\bu}}_{L^2(\Omega_T)}\longrightarrow0.
\end{equation}
Define $\bu_h^{(\ell)}(\bx,t):=\bu_h^{n,(\ell)}(\bx)$ on
$[t^n,t^{n+1})$. Applying \eqref{eq:all-stage-L2} cellwise and summing
in space and time gives, for all $\ell=1,\ldots,s$,
\begin{equation}\label{eq:LW-stage-convergence}
\begin{aligned}
    \norm{\bu_h^{(\ell)}-\bu}_{L^2(\Omega_T)}
    &\leq\norm{\bu_h^{(\ell)}-\overline{\bu}_h}_{L^2(\Omega_T)}
       +\norm{\overline{\bu}_h-\overline{\bu}}_{L^2(\Omega_T)}+\norm{\overline{\bu}-{\bu}}_{L^2(\Omega_T)}\longrightarrow0.
\end{aligned}
\end{equation}

\paragraph{Step 2: a weighted face estimate}
To approximate $\bu$ in $L^2(\Omega_T)$, consider a smooth
$\Bcal$-valued function $\bv$ defined on a neighborhood of
$\overline{\Omega_T}$. Let $\overline{\bv}_K(t)$ be its spatial
cell average over $K$. Adding and subtracting $\overline{\bv}_K$
and applying the inverse trace estimate \eqref{eq:inverse-trace},
we obtain
\begin{equation}
\begin{aligned}
 h_K\norm{\bu_h^{(\ell),-}-\bv}_{\partial K}^2
 &\le C\norm{\bu_h^{(\ell)}-\overline{\bv}_K}_K^2
   +C h_K\norm{\bv-\overline{\bv}_K}_{\partial K}^2 \\
 &\le C\norm{\bu_h^{(\ell)}-\bv}_K^2
   +C h_K^2|K|\norm{\nabla\bv}_{L^\infty}^2,
\end{aligned}    
\end{equation}
where the second line uses
$|\bv-\overline{\bv}_K|\le h_K\norm{\nabla\bv}_{L^\infty}$ and
\eqref{eq:cell-surface-ratio}.
Thus \eqref{eq:LW-stage-convergence} yields
\begin{equation}\label{eq:LW-weighted-face-limsup}
    \limsup_{h\to0}\int_0^T\sum_K h_K
       \norm{\bu_h^{(\ell),-}-\bv}_{\partial K}^2\dd t
       \leq C\norm{\bu-\bv}_{L^2(\Omega_T)}^2.
\end{equation}
Here $C$ is independent of $\bv$; no trace of the limiting function
$\bu$ is used.

\paragraph{Step 3: consistency of the RK numerical fluxes}
Fix $\varphi\in C_c^\infty(\Omega\times[0,T))$ and set
\begin{equation}\label{eq:LW-test-averages}
    \overline\varphi_K^{\,n}:=
       \fint_K\varphi(\bx,t^n)\dd\bx,
    \qquad
    \varphi_{h,\tau}|_{K\times[t^n,t^{n+1})}
       :=\overline\varphi_K^{\,n+1}.
\end{equation}
For $h$ small enough, these averages vanish on cells meeting
$\partial\Omega$, so physical boundary faces do not contribute below.
For fixed $q\in\mathcal I$, define
\begin{equation}\label{eq:LW-entropy-flux-sum}
    S_{h,q}(\varphi):=
       \sum_{n=0}^{N_h-1}\tau_h\sum_K
          \overline\varphi_K^{\,n+1}
          \int_{\partial K}\wh{\Q}_{q,\bn_K}^{n,\mathrm{RK}}\dd s.
\end{equation}
In the remaining part of this step, we prove
\begin{equation}\label{eq:LW-entropy-flux-limit}
    S_{h,q}(\varphi)\longrightarrow
       -\int_0^T\int_\Omega
          \boldsymbol{\mathcal Q}_q(\bu)\cdot\nabla\varphi\dd\bx\dd t.
\end{equation}

As $\bu$ may not have sufficient regularity to define traces on cell faces, we introduce a smooth comparison function $\bv$. Set
\begin{equation}\label{eq:LW-smooth-flux-sum}
\begin{aligned}
S_{h,q}[\bv](\varphi)
&:=\int_0^T\int_\Omega
\varphi_{h,\tau}\,
\nabla\cdot\boldsymbol{\mathcal Q}_q(\bv)
\dd\bx\dd t \\
&=
\sum_{n=0}^{N_h-1}\int_{t^n}^{t^{n+1}}
\sum_K \overline{\varphi}_K^{\,n+1}
\int_{\partial K}\Q_{q,\bn_K}(\bv)\dd s\dd t.
\end{aligned}
\end{equation}
Here the second identity comes from the divergence theorem. On an interior face $e=K|L$, write
$\wh{\Q}_{q,e}^{n,(\ell)}:=
\wh{\Q}_{q,\bn_K}(\bu_{h,K}^{n,(\ell),-},\bu_{h,L}^{n,(\ell),-})$.
Subtracting \eqref{eq:LW-smooth-flux-sum} from
\eqref{eq:LW-entropy-flux-sum} and regrouping the interface integrals
over the shared faces $e=K|L$ yields
\begin{equation}\label{eq:LW-flux-difference-RK}
\begin{aligned}
&S_{h,q}(\varphi)-S_{h,q}[\bv](\varphi)\\
&=\sum_{n=0}^{N_h-1}\int_{t^n}^{t^{n+1}}
   \sum_{e=K|L}\int_e
   (\overline\varphi_K^{\,n+1}-\overline\varphi_L^{\,n+1})\sum_{\ell=1}^s b_\ell
   [\wh{\Q}_{q,e}^{n,(\ell)}-\Q_{q,\bn_K}(\bv)]\dd s\dd t .
\end{aligned}
\end{equation}
Here we have used $\bn_K = -\bn_L$, conservation of $\wh{\Q}$, and $\sum_\ell b_\ell=1$.
The mean-value theorem and quasi-uniformity imply
\begin{equation}\label{eq:LW-test-jump}
\begin{aligned}
|\overline\varphi_K^{\,n+1}-\overline\varphi_L^{\,n+1}|
&\leq\norm{\nabla\varphi}_{L^\infty}
   \fint_K\fint_L|\bx-\by|\dd\by\dd\bx
&\leq(h_K+h_L)\norm{\nabla\varphi}_{L^\infty}
 \leq C_\varphi h_K.
\end{aligned}
\end{equation}
By consistency and Lipschitz continuity of the numerical entropy flux,
\begin{equation}\label{eq:LW-stage-flux-Lipschitz}
    |\wh{\Q}_{q,e}^{n,(\ell)}-\Q_{q,\bn_K}(\bv)|
    \leq C_q\bigl(
       |\bu_{h,K}^{n,(\ell),-}-\bv|
       +|\bu_{h,L}^{n,(\ell),-}-\bv|\bigr).
\end{equation}
Combining the last three estimates and using $b_\ell\geq0$ gives
\begin{equation}\label{eq:LW-face-flux-comparison}
\begin{aligned}
|S_{h,q}(\varphi)-S_{h,q}[\bv](\varphi)|
&\leq C_{q,\varphi}\sum_{\ell=1}^s b_\ell
 \int_0^T\sum_{e=K|L}h_K\int_e
 \bigl(|\bu_{h,K}^{(\ell),-}-\bv|
      +|\bu_{h,L}^{(\ell),-}-\bv|\bigr)\dd s\dd t\\
&\leq C_{q,\varphi}\sum_{\ell=1}^s b_\ell
 \int_0^T\sum_K h_K\int_{\partial K}
 |\bu_h^{(\ell),-}-\bv|\dd s\dd t.
\end{aligned}
\end{equation}
For the second inequality, the $L$-trace is the interior trace of the
neighboring cell and quasi-uniformity gives $h_K\leq C h_L$.
Cauchy--Schwarz and \eqref{eq:cell-surface-ratio} now yield
\begin{equation}\label{eq:LW-flux-comparison-CS}
\begin{aligned}
\int_0^T\!\sum_K h_K\!\int_{\partial K}
|\bu_h^{(\ell),-}-\bv|\dd s\dd t
&\leq(C_kT|\Omega|)^{1/2}
\left(\int_0^T\!\sum_K h_K
\norm{\bu_h^{(\ell),-}-\bv}_{\partial K}^2\dd t\right)^{1/2}.
\end{aligned}
\end{equation}
Therefore \eqref{eq:LW-weighted-face-limsup} implies
\begin{equation}\label{eq:LW-flux-comparison-limsup}
    \limsup_{h\to0}|S_{h,q}(\varphi)-S_{h,q}[\bv](\varphi)|
    \leq C_{q,\varphi}\norm{\bu-\bv}_{L^2(\Omega_T)},
\end{equation}
where $C_{q,\varphi}$ is independent of $\bv$.

For fixed smooth $\bv$, the estimate
$\norm{\varphi_{h,\tau}-\varphi}_{L^\infty}
\leq C_\varphi(h+\tau_h)$ gives
\begin{equation}\label{eq:LW-smooth-flux-limit}
\begin{aligned}
S_{h,q}[\bv](\varphi)
&=\int_0^T\int_\Omega\varphi_{h,\tau}
   \nabla\cdot\boldsymbol{\mathcal Q}_q(\bv)\dd\bx\dd t\longrightarrow-\int_0^T\int_\Omega
   \boldsymbol{\mathcal Q}_q(\bv)\cdot\nabla\varphi\dd\bx\dd t,
\end{aligned}
\end{equation}
because $\varphi$ vanishes near $\partial\Omega$. Hence
\begin{equation}\label{eq:LW-entropy-flux-limsup-v}
\begin{aligned}
\limsup_{h\to0}\left|
S_{h,q}(\varphi)+\int_0^T\int_\Omega
\boldsymbol{\mathcal Q}_q(\bv)\cdot\nabla\varphi\dd\bx\dd t\right|
\leq C_{q,\varphi}\norm{\bu-\bv}_{L^2(\Omega_T)}.
\end{aligned}
\end{equation}
Lipschitz continuity of $\boldsymbol{\mathcal Q}_q$ on $\Bcal$ also gives
\begin{equation}\label{eq:LW-smooth-entropy-flux-approximation}
\begin{aligned}
\left|\int_0^T\int_\Omega
[\boldsymbol{\mathcal Q}_q(\bv)-\boldsymbol{\mathcal Q}_q(\bu)]
\cdot\nabla\varphi\dd\bx\dd t\right|
\leq C_{q,\varphi}\norm{\bu-\bv}_{L^2(\Omega_T)}.
\end{aligned}
\end{equation}
Approximating $\bu$ by smooth $\Bcal$-valued functions $\bv$ in
$L^2(\Omega_T)$ proves \eqref{eq:LW-entropy-flux-limit}. The same
argument applied componentwise to the physical numerical flux gives
\begin{equation}\label{eq:LW-physical-flux-limit}
\begin{aligned}
&\sum_{n=0}^{N_h-1}\tau_h\sum_K\overline\varphi_K^{\,n+1}
 \int_{\partial K}\wh{\bfv}_{\bn_K}^{n,\mathrm{RK}}\dd s \longrightarrow
 -\int_0^T\int_\Omega\sum_{r=1}^d
 \bfv_r(\bu)\,\partial_{x_r}\varphi\dd\bx\dd t.
\end{aligned}
\end{equation}

\paragraph{Step 4: the weak conservation law}
Set
$\delta_t\overline\varphi_K^{\,n}:=
(\overline\varphi_K^{\,n+1}-\overline\varphi_K^{\,n})/\tau_h$.
Multiplying \eqref{eq:post-limiter-average} by
$|K|\overline\varphi_K^{\,n+1}$ and summing by parts in time gives
\begin{equation}\label{eq:LW-discrete-conservation}
\begin{aligned}
&\sum_{n=0}^{N_h-1}\tau_h\sum_K
 |K|\overline{\bu}_{h,K}^{\,n}\delta_t\overline\varphi_K^{\,n}
 +\sum_K|K|\overline{\bu}_{h,K}^{\,0}\overline\varphi_K^{\,0}-
 \sum_{n=0}^{N_h-1}\tau_h\sum_K\overline\varphi_K^{\,n+1}
 \int_{\partial K}\wh{\bfv}_{\bn_K}^{n,\mathrm{RK}}\dd s=\mathbf0.
\end{aligned}
\end{equation}
The first term equals
$\int_0^T\int_\Omega
       \bu_h\,\partial_t\overline\varphi\dd\bx\dd t$, 
where $\overline\varphi$ denotes the spatial cell average of
$\varphi(\cdot,t)$ at each time $t$. Since
$\partial_t\overline\varphi=\overline{\partial_t\varphi}$ converges
uniformly to $\partial_t\varphi$, this term has the required limit.
The initial term converges similarly, and
\eqref{eq:LW-physical-flux-limit} then proves
\eqref{eq:LW-weak-solution}.

\paragraph{Step 5: the prescribed entropy inequalities}
Fix $q\in\mathcal I$ and $\varphi\geq0$, and set
$E_{K,q}^n:=\int_K\eta_q(\bu_h^n)\dd\bx$. Multiplying
\eqref{eq:strong-cell-entropy} by
$|K|\overline\varphi_K^{\,n+1}$ and summing by parts gives
\begin{equation}\label{eq:LW-discrete-entropy}
    \sum_{n=0}^{N_h-1}\tau_h\sum_K
       E_{K,q}^n\delta_t\overline\varphi_K^{\,n}
    +\sum_K E_{K,q}^0\overline\varphi_K^{\,0}
    -S_{h,q}(\varphi)\geq0.
\end{equation}
The first two terms equal
$\int_0^T\int_\Omega
       \eta_q(\bu_h)\,\partial_t\overline\varphi\dd\bx\dd t
       +\int_\Omega\eta_q(\bu_h^0)\overline\varphi(\bx,0)\dd\bx$. 
For this fixed $q$, Lipschitz continuity of $\eta_q$ on $\Bcal$ and
\eqref{eq:LW-strong-convergence} give convergence of these two terms,
while the flux term converges by \eqref{eq:LW-entropy-flux-limit}.
Passing to the limit proves \eqref{eq:LW-entropy-solution}. The same
argument applies to every fixed $q\in\mathcal I$ along the original
sequence; no entropy-dependent subsequence or uniform-in-$q$
Lipschitz bound is required.
\end{proof}

\textbf{Acknowledgements or Declarations}
This work was supported by NSF grant DMS-2208391. The author also
acknowledges OpenAI for providing access to ChatGPT and Codex through
the ChatGPT for Academic Researchers program, and Anthropic for
providing access to Claude through the Claude Team plan for research
labs. Generative AI tools (OpenAI's ChatGPT and Codex; Anthropic's Claude)
were used for mathematical discussions, manuscript drafting and revision,
LaTeX preparation, and the development, execution, and checking of
computer code for the numerical experiments, including the generation
of figures and tables. The author reviewed all content and reported results and takes
full responsibility for the manuscript.


\end{document}